\theoremstyle{definition}
\theoremstyle{remark}
\numberwithin{equation}{section}
\swapnumbers \theoremstyle{plain}
\newtheorem{thm}{Theorem}[section]
\newtheorem{lem}[thm]{Lemma}
\newtheorem{cor}[thm]{Corollary}
\newtheorem{prop}[thm]{Proposition}
\theoremstyle{remark}
\theoremstyle{definition}
\newtheorem{defn}[thm]{Definition}
\newcommand{\T}{\mathcal T}
\newcommand{\C}{\mathcal C}
\renewcommand{\S}{{\mathcal S}}
\newcommand{\bdy}{\partial}
\newcommand{\bbb}{\mathbb}
\newcommand{\rppp}{\mathbb{R}P^3}
\newcommand{\td}{\tilde}
\newcommand{\open}[1]{\stackrel{\circ}{#1}}
\newcommand{\ve}{\varepsilon}
\newcommand{\abs}[1]{\lvert#1\rvert}
\begin{document}

\title{Efficient triangulations and boundary slopes}
\author{Birch Bryant}
\address{Department of Mathematics, Oklahoma State University,
Stillwater, OK 74078}
\email{jbirch.bryant@okstate.edu}
\thanks{The first author was partially supported by
The Grayce B. Kerr Foundation}
\author{William Jaco}
\address{Department of Mathematics, Oklahoma State University,
Stillwater, OK 74078}

\email{william.jaco@okstate.edu}
\thanks{The second author was partially supported by NSF/DMS Grants 9971719 and 0204707,
The Grayce B. Kerr Foundation, The American Institute of Mathematics
(AIM), and and The Visiting Research Scholar Program at University
of Melbourne (Australia)}

\author{J.~Hyam Rubinstein}
\address{Department of Mathematics and Statistics,
University of Melbourne, Parkville, 
VIC 3010, Australia}
\email{rubin@ms.unimelb.edu.au}
\thanks{The third author was partially supported by The Australian Research
Council and The Grayce B. Kerr Foundation.}

\subjclass{Primary 57N10, 57M99; Secondary 57M50}
 \date{\today}

\maketitle

\section{Introduction}  In \cite{jac-rub-0eff} it is shown that for $M \ne \mathbb{B}^3$, a compact, irreducible, $\bdy$--irreducible, and an-annular $3$-manifold with non-empty boundary,  any triangulation $\T$ of $M$ can be modified by crushing along a normal surface in $\T$ to an ideal triangulation $\T^*$ of the interior of $M$, $\open{M}$. The ideal triangulation $\T^*$ is constructed from a proper subset of the tetrahedra and face identifications of $\T$, resulting in $|\T^*|<|\T|$. Conversely, in \cite{jac-rub-inflate} it is shown that for any ideal triangulation $\T^*$ of $\open{M}$ there is an inflation modifying $\T^*$ to a minimal-vertex, normal-boundary triangulation $\T_{\Lambda}$ of $M$. Such an inflation is constructed from the tetrahedra and face identifications of $\T^*$ guided by a frame, $\Lambda$, in the vertex-linking surface(s) of $\T^*$; furthermore, the triangulation $\T_\Lambda$ can be crushed along its boundary-linking normal surface(s) returning to the triangulation $\T^*$ of $\open{M}$. Inflation triangulations of $\T^*$ may vary according to the chosen frame; however, each has a boundary-linking normal surface and admits a unique combinatorial crushing along the boundry-linking normal surface  back to the same ideal triangulation $\T^*$ of $\open{M}$.  

In Section 3 we review crushing triangulations along normal surfaces and inflating ideal triangulations.   Since an inflation is defined in terms of a combinatorial crushing, this enables us to establish very strong relationships between ideal triangulations and their inflations.  In particular, in Theorem \ref{bijection-ideal-inflate}  we establish a one-one correspondence between the closed normal surfaces  in an ideal triangulation  and the closed normal surface in any of its inflations; furthermore, surfaces corresponding under this bijection  are homeomorphic. In particular, starting with an ideal triangulation, $\T^*$, of the interior of a compact 3-manifold, $M$, with boundary, if $\T^*$ is $0$-efficient, $1$-efficient, or end-efficient, then any inflation of $\T^*$ is $0$-efficient, $1$-efficient, or $\bdy$-efficient, respectively. In the last case  the inflation triangulation is also annular-efficient if the manifold $M$ is an-annular. 

In Section 4, and similar to results about $0$-efficient triangulations \cite{jac-rub-0eff}, we define and study annular-efficient triangulations, normal-boundary triangulations, and boundary-linking normal surfaces and introduce boundary-efficient triangulations and end-efficient ideal triangulations. We characterize topological conditions necessary and sufficient for each of these triangulations and provide an algorithm showing for a manifold $M$, satisfying these conditions and distinct from $\mathbb{B}^3$, then any triangulation of $M$ or ideal triangulation of $\open{M}$ may be modified to a boundary-efficient triangulation of $M$ or an end-efficient ideal triangulation of $\open{M}$, respectively.  

Using these methods for modifying any given triangulation, we are able to add to the study and understanding of  boundary slopes for interesting surfaces in compact 3-manifolds with boundary.  There are a number of results in the literature on boundary slopes for incompressible and $\bdy$-incompressible surfaces in knot manifolds. The major question is to whether the isotopy classes (slopes) for the boundaries of such surfaces in the exterior of a knot in $S^3$ is a finite set. This had been shown to be true for certain knots (torus knots, the figure-eight knot, 2-bridge knots, and alternating knots) when A. Hatcher \cite{hat-82} proved that for a compact 3-manifold $M$ with a single incompressible torus boundary, there are just a finite number of slopes for boundary curves of incompressible and $\bdy$-incompressible surfaces in $M$.  For manifolds with a single torus  boundary, this was generalized to normal surfaces in any minimal-vertex triangulation of $M$ \cite{jac-sed-dehn} and, hence, true for incompressible and $\bdy$-incompressible surfaces in $M$. Also, in \cite{jac-rub-sed}  there is a special case to one of our results in this paper for a compact, irreducible, $\bdy$-irreducible and an-annular 3-manifold having all components of its boundary tori; namely,  there are only finitely many boundary slopes for normal surfaces (and, hence, for  incompressible and $\bdy$-incompressible surfaces) of a bounded Euler characteristics.  In Section 5, we extend this latter result to manifolds with arbitrary genus boundary components that admit an annular-efficient triangulation to prove there are only a finite number of boundary slopes for normal surfaces of a bounded Euler characteristic; hence,  for a compact, irreducible, $\bdy$--irreducible, and an-annular 3-manifolds there are only a finite number of boundary slopes possible for incompressible and $\bdy$--incompressible surfaces having a bounded Euler characteristic.  

We wish to thank David Bachman and Saul Schleimer for many conversations and insights; they studied similar problems and use the terminology $\sfrac{1}{2}$-efficient triangulation for what we call annular-efficient.  In \cite{lack-taut} and \cite{lack-HS-genus} M. Lackenby studies angle structures on ideal triangulations of the interior of compact, orientable, simple (irreducible, $\bdy$-irreducible, an-annular and atoridal)  3-manifolds.   The latter is most relevant to this paper.   We discuss the relationship of these results and ours  in a brief Summary at the end of this paper.

\section{Triangulations and Normal Surfaces} We continue with the
use of (pseudo) triangulations and ideal triangulations as in  \cite{jac-rub-0eff}.

 If
$\mathbf{\td{\Delta}}$ is a pairwise disjoint collection of oriented
tetrahedra and $\mathbf{\Phi}$ is a family of orientation-reversing, 
affine face identifications of the tetrahedra in $\mathbf{\td{\Delta}}$,
then the identification space $X
=\mathbf{\td{\Delta}}/\mathbf{\Phi}$ is a 3-complex and is a
3--manifold at each point except possibly at the vertices. If $X$ is
a manifold, we denote the collection of tetrahedra and the face
identifications by a single symbol $\T$ and say $\T$ is a
\emph{triangulation} of the manifold $X$. If $X$ is not a manifold,
then $X\setminus\{vertices\}$ is the interior of a compact
$3$--manifold, $M$,  with boundary and we say $\T$ is an \emph{ideal
triangulation} of $\open{M}$, the interior of $M$; in this case we also say that $X$ is a \emph{pseudo-manifold} and $\T$ is an ideal triangulation of $X$. At each vertex of a tetrahedron there is a properly embedded triangle separating that vertex from the face of the tetrahederon opposite the vertex; the collection of all such triangles, one for each vertex of each tetrahedron, form a properly embedded surface. A component of this surface is call a {\it vertex-linking surface} for the associated vertex of the triangulation.   For an ideal triangulation, the
image of a vertex of a tetrahedron in $\mathbf{\td{\Delta}}$ is
called an {\it ideal vertex} and its {\it index} is the genus of its
vertex-linking surface. For an ideal triangulation we will always
assume the index of each vertex is $\ge 1$.

For our triangulations, the simplexs of $\mathbf{\td{\Delta}}$ are
not necessarily embedded in $X$; however, the interior of each
simplex is embedded. We call the image in $X$ of a tetrahedron,
face, or edge in $\mathbf{\td{\Delta}}$, a tetrahedron, face, or
edge, respectively. For a tetrahedron $\Delta$ in $X$, there is precisely one
tetrahedron $\td{\Delta}$ in $\mathbf{\td{\Delta}}$ that projects to
$\Delta$, called the {\it lift of $\Delta$}. For a face $\sigma$ in
$X$, there are either one or two faces in $\mathbf{\td{\Delta}}$
that project to $\sigma$; if only one face projects to $\sigma$,
then $\sigma$ is in the boundary of $M$. If $e$ is an edge in $X$,
the number of edges in $\mathbf{\td{\Delta}}$ that project to $e$ is
the {\it index of $e$}.

See \cite{jac-rub-0eff} for more details regarding triangulations from
our point of view.

\subsection{Normal surfaces} If $M$ is a $3$--manifold and $\T$ is a
triangulation of $M$, we say the properly embedded surface $S$ in
$M$ is {\it normal} (with respect to $\T$) if for every tetrahedron
$\Delta$ in $\mathbf{\td{\Delta}}/\mathbf{\Phi}$,  the intersection
of $S$ with $\Delta$ lifts to a collection of normal triangles and
normal quadrilaterals in $\td{\Delta}$, the lift of $\Delta$.
Note
that since our tetrahedra have possible face identifications, the
intersection of a normal surface 
with a tetrahedron need not be a
normal triangle or a normal quadrilateral 
but might be one of these
with edge identifications.

We shall assume the reader is familiar with classical normal 
surface
theory, which carries over in all of our situations. In particular, if $\T$ is a triangulation or ideal triangulation of the manifold $M$, an isotopy of $M$ is called a {\it normal isotopy} (with respect to $\T$) if it is invariant on each simplex of $\T$. 

A triangulation of a compact $3$--manifold
with boundary is said to be a {\it normal boundary triangulation} or
to have a {\it normal boundary} if the frontier of a small regular
neighborhood of the boundary is normally isotopic to a normal
surface. In this case, we call the normal surface consisting of the
frontier of a small regular neighborhood of the boundary a {\it
boundary-linking surface}. Not all triangulations have a normal boundary; for
example, layered triangulations of handlebodies \cite{jac-rub-layered} contain no closed normal surfaces
and, hence, can not have a normal boundary and a 0-efficient triangulation of the $3$--cell (there are infinitely many) contains no normal 2-spheres and thus has no normal boundary..  

A properly embedded surface in a compact 3--manifold with boundary is said to be \emph{isotopic into $\bdy M$} if there is an isotopy of the surface through $M$ into $\bdy M$ keeping the boundary of the surface in $\bdy M$.  If the manifold is triangulated and a surface is  closed and normal, it is said to be \emph{normally isotopic into $\bdy M$}, if  the triangulation has normal boundary and the surface is normally isotopic to the boundary-linking surface. We are interested in triangulations in which the only closed, normal surface isotopic into the boundary is boundary-linking. 

A properly embedded annulus in a $3$--manifold is {\it essential} if
it is incompressible and $\bdy$-incompressible.  For a 3--manifold that is irreducible and $\bdy$-irreducible,  a properly embedded annulus is essential iff it is irreducible and not isotopic into $\bdy M$.  A compact
$3$--manifold is said to be {\it an-annular} if it has no properly
embedded, essential annuli.

\section{Basics of crushing and inflating triangulations}\label{crushing-inflations}
\subsection{Crushing triangulations along normal surfaces}  In \cite{jac-rub-0eff} we introduced the procedure of ``crushing a
triangulation along a normal surface."  Details may be reviewed there, as well as in  \cite{jac-rub-inflate}, where the details apply more directly to our situation in this work.

Suppose
$\T$ is a triangulation of the compact $3$--manifold $M$ or an ideal
triangulation of the interior of $M$. Suppose $S$ is a closed normal
surface in $M$, $X$ is the closure of a component of the complement
of $S$, and $X$ does not contain any of the vertices of $\T$. Since
$X$ does not contain any of the vertices of $\T$, the triangulation
$\T$ induces a particularly nice cell-decomposition on $X$, say $\mathcal{C}_X$, 
consisting of {\it truncated-tetrahedra, truncated-prisms, triangular
product blocks}, and {\it quadrilateral product blocks}.  See Figure
\ref{f-cell-decomp}. 

\begin{figure}[htbp]
            \psfrag{X}{$X$}
            \psfrag{s}{\small tetrahedron}
            \psfrag{f}{\small face}

             \psfrag{c}{\small crush}
            \psfrag{e}{\small edge}
            \psfrag{t}{\begin{tabular}{c}
          {\small truncated-tetrahedron}\\
            \end{tabular}}
            \psfrag{p}{\begin{tabular}{c}
            {\small truncated-prism}\\
            \end{tabular}}
            \psfrag{q}{\begin{tabular}{c}
          {\small triangular}\\
          {\small product block}\\
            \end{tabular}}
            \psfrag{r}{\begin{tabular}{c}
          {\small quadrilateral}\\
          {\small product block}\\
            \end{tabular}}

        \vspace{0 in}
        \begin{center}
            \includegraphics[width=3.5 in]{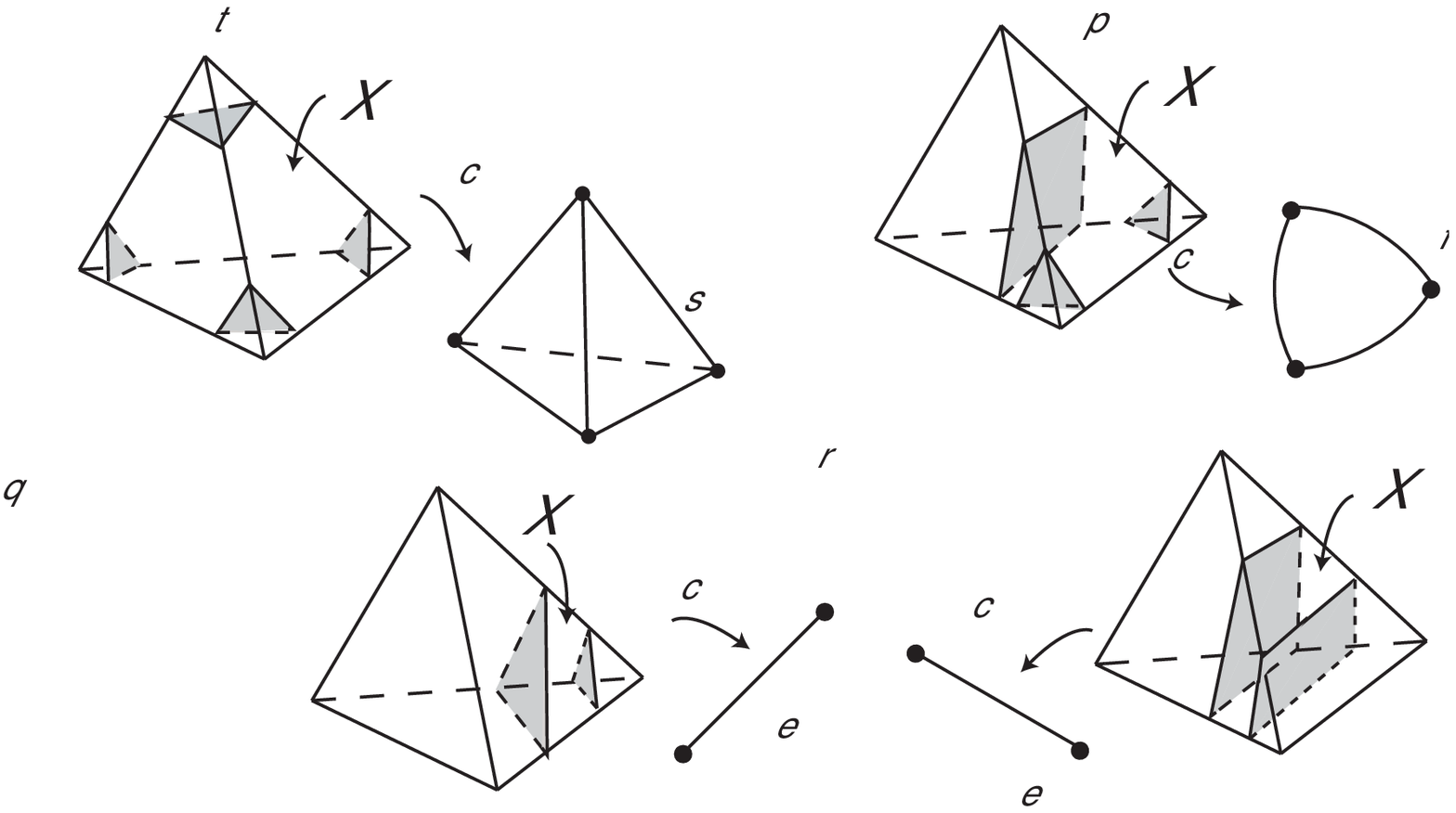}
        \caption{Cells in induced cell-decomposition $\mathcal{C}_X$ of $X$ and their crushing
         to tetrahedra, faces, and edges in an ideal triangulation of $\open{X}$.}
        \label{f-cell-decomp}
        \end{center}

\end{figure}]

The boundary of each $3$--cell in $\C_X$ has an induced cell
decomposition in which some of the cells are in $S$ and some are
not. The edges and faces in the decomposition $\C_X$ are called {\it
horizontal} if their interiors are in $S$ and {\it vertical} if
their interiors are not in $S$. The quadrilateral vertical
$2$--cells are called {\it trapezoids}; there are two in a
truncated-prism, three in a triangular block, and four in a
quadrilateral block. The non-trapezoidal vertical $2$--cells are in
truncated-prisms and truncated-tetrahedra and are hexagons.

We define $\mathbb{P}(\C_X)$ as the union, $\mathbb{P}(\C_X)=$ $ \{$vertical edges of $\C_X  \}$ $ \cup \{$trapezoids$\}\cup\{$triangular
blocks$\}\cup\{$quadrilateral blocks$\}$.  Each component of $\mathbb{P}(\C_X)$ is an $I$--bundle.  Suppose each component of $\mathbb{P}(\C_X)$ is a product $I$ bundle. Then a component of $\mathbb{P}(\C_X)$ is a product $\mathbb{P}_i = K_i\times I$,  where $K_i^\ve =K_i\times \ve, \ve =0, 1$, and $K_i\times 0$ and $K_i\times 1$ are isomorphic  subcomplexes in the induced normal cell decomposition on $S$, $i=1,2,\ldots,k$, $k$ being the number of components of $\mathbb{P}(\C_X)$.  In this situation, we call $\mathbb{P}(\C_X)$ the {\it combinatorial product for $\C_X$}.  If $\mathbb{P}(\C_X) \not= X$ and each $K_i$ is a simply
connected planar complex (hence, it is cell-like), we say $\bbb{P}(\C_X)$ is a {\it trivial combinatorial product}.  In applications, we do not always have things so nice and we need to modify $\mathbb{P}(\C_X)$ to an {\it induced product region for $X$}, denoted $\mathbb{P}(X)$.

Now, consider the truncated-prisms in $\C_X$. Each truncated-prism has
two hexagonal faces. In $\C_X$, these hexagonal faces are identified
via the face identifications of the given triangulation $\T$ to a
hexagonal face of a truncated-tetrahedron or to a hexagonal face of
truncated-prism. If we follow a sequence of such identifications
through hexagonal faces of truncated-prisms, we trace out a
well-defined arc that terminates at an identification with a
hexagonal face of a truncated-tetrahedron or possibly does not
terminate but forms a complete cycle through hexagonal faces of
truncated-prisms. We call a collection of
truncated-prisms identified in this way a {\it chain}. If a chain
ends in a truncated-tetrahedra, we say the chain {\it terminates};
otherwise, we call the chain a {\it cycle of truncated-prisms}.

Just as in \cite{jac-rub-inflate}, under appropriate conditions, we can construct an ideal triangulation of $\open{X}$ using a controlled crushing of the cells of $\C_X$.   In particular, to obtain the desired  ideal triangulation of $\open{X}$ it is sufficient that  
$X\ne \bbb{P}(\C_X)$ or in the more general case $X\ne \bbb{P}(X)$ (there are not too many product blocks) and there are no
cycles of truncated-prisms (there are not too many truncated-prisms).  As a result of the crushing,  each component of $S$ is
crushed to a point (distinct points for distinct components), all designated 
products are crushed to arcs and, in particular, the products $K_i\times I$ are crushed to arcs (edges) so that if
$K_i\times I$ is crushed to the edge $e_i$, then the crushing
projection coincides with the projection of $K_i\times I$ onto the
$I$ factor.  Vertical edges, trapezoids, and product
blocks in $\C_X$ are identified to edges in the ideal triangulation. Truncated-prisms becomes faces
and truncated-tetrahedra become  tetrahedra.  Consult \cite{jac-rub-0eff} and see Figure \ref{f-cell-decomp}.

The crushing is particularly nice in the case that $\bbb{P}(\C_X)$ is a trivial combinatorial product, $X\ne \bbb{P}(\C_X)$, and there are no cycles of truncated prisms.  In this case, suppose $\{\overline{\Delta}_1,\ldots,\overline{\Delta}_n\}$ denotes the
collection of truncated-tetrahedra in $\C_X$.  Each
truncated-tetrahedron in $\C_X$ has its triangular faces in $S$. If we
crush each such triangular face of a truncated-tetrahedron to a
point (for the moment, distinct points for each triangular face), we
get a tetrahedron. We use the notation $\td{\Delta}_i^*$ for the
tetrahedron coming from the truncated-tetrahedron
$\overline{\Delta}_i$ after identifying  the triangular faces of
$\overline{\Delta}_i$ to points.  Also as a consequence of this crushing of $S$, if $\overline{\sigma}_i$ is a
hexagonal face in $\overline{\Delta}_i$, then $\overline{\sigma}_i$
is identified to a triangular face, say $\td{\sigma}_i^*$, of
$\td{\Delta}_i^*$.

Let $\mathbf{\bf{\td{\Delta}^*}} =
\{\td{\Delta}_1^*,\ldots,\td{\Delta}_n^*\}$ be the tetrahedra
obtained from the collection of truncated-tetrahedra
$\{\overline{\Delta}_1,\ldots,\overline{\Delta}_n\}$ following the
crushing of the normal triangles in the surface $S$ to points. It
follows that there is a family $\mathbf{\Phi}^*$ of face-pairings
induced on the collection of tetrahedra
$\mathbf{\bf{\td{\Delta}^*}}$ by the face-pairings of $\C_X$ (coming
from the face-pairings of $\T$) as follows:
\begin{itemize}\item[-] if the face $\overline{\sigma}_i$ of
$\overline{\Delta}_i$ is paired with the face $\overline{\sigma}_j$
of $\overline{\Delta}_j$, then this pairing induces the pairing of
the face $\td{\sigma}_i^*$ of $\td{\Delta}_i^*$  with the face
$\td{\sigma}_j^*$ of $\td{\Delta}_j^*$ ;\item [-] if the face
$\overline{\sigma}_i$ of $\overline{\Delta}_i$ is paired with a face
of a truncated-prism in a chain of truncated-prisms and the face
$\overline{\sigma}_j$ of the truncated-tetrahedron
$\overline{\Delta}_j$ is also paired with a face of this chain of
truncated-prisms, then the face $\td{\sigma}_i^*$ of
$\td{\Delta}_i^*$ has an induced pairing with the face
$\td{\sigma}_j^*$ of $\td{\Delta}_j^*$ through the chain of
truncated-prisms.\end{itemize}

Hence, we get a $3$--complex
$\boldsymbol{\td{\Delta}}^*/\boldsymbol{\td{\Phi}}^*$, which is a
$3$--manifold except, possibly, at its vertices. We will denote the
associated ideal triangulation by $\T^*$. We call $\T^*$ the ideal
triangulation obtained by {\it crushing the triangulation $\T$ along
$S$}. We denote the image of a tetrahedron $\td{\Delta}^*_i$ by
$\Delta^*_i$ and, as above, call  $\td{\Delta}^*_i$ the lift of
$\Delta^*_i$.

We have the following  version of the Fundamental Theorem for Crushing Triangulations along a  Normal Surface.  A more general version and its proof appear as Theorem 4.1 in \cite{jac-rub-0eff}.  

\begin{thm}\label{combinatorial-crush} Suppose $\T$ is a triangulation of a compact, orientable $3$--manifold
or an ideal triangulation of the interior of a compact, orientable
$3$--manifold $M$. Suppose $S$ is a closed normal surface embedded in
$M$, $X$ is the closure of a component of the complement of $S$,
and $X$ does not contain any vertices of $\T$. If
\begin{enumerate}
\item[i)] $X\ne \bbb{P}(\C_X)$ \item[ii)] $\bbb{P}(\C_X)$ is a trivial
product region for $X$,  and \item[iii)] there are no cycles of
truncated prisms in $X$,  \end{enumerate}
then the triangulation $\T$ can be crushed along $S$ and the ideal triangulation $\T^*$ obtained by crushing $\T$ along $S$ is
an ideal triangulation of $\open{X}$.
\end{thm}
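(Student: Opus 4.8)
The plan is to show that the three hypotheses are exactly what is needed to guarantee that the combinatorial crushing procedure described above actually produces a well-defined ideal triangulation, and that the resulting pseudo-manifold is the interior of $X$. Since the construction of $\boldsymbol{\td{\Delta}}^*/\boldsymbol{\td{\Phi}}^*$ has already been spelled out cell-by-cell, the real content is to verify that this combinatorial object (a) is a legitimate ideal triangulation and (b) has underlying space homeomorphic to $\open{X}$. I would organize the proof around these two verifications, leaning on Theorem 4.1 of \cite{jac-rub-0eff} for the general machinery and isolating where each of hypotheses (i)--(iii) enters.

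First I would check that the collection of tetrahedra $\td{\Delta}_i^*$ together with the induced face-pairings $\boldsymbol{\td{\Phi}}^*$ is consistent: every hexagonal face of a truncated-tetrahedron gets paired exactly once. This is where hypothesis (iii), \emph{no cycles of truncated-prisms}, is essential. Following a chain of truncated-prisms through their hexagonal identifications either terminates at a truncated-tetrahedron on each end or closes up into a cycle; ruling out cycles guarantees every chain has two well-defined truncated-tetrahedron endpoints, so the second bullet in the definition of $\boldsymbol{\td{\Phi}}^*$ produces a genuine pairing rather than an identification with no tetrahedron face to attach to. I would then argue that collapsing each chain of truncated-prisms to an edge does not create degeneracies, which is precisely why cycles must be excluded (a cycle would collapse to a circle, not an arc).

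Next I would identify the underlying space. The idea is that crushing each component of $S$ to a point and collapsing $\bbb{P}(\C_X)$ along its $I$-fibers is, up to the point-collapses, a deformation retraction. Here hypothesis (ii), that $\bbb{P}(\C_X)$ is a \emph{trivial} combinatorial product, does the work: each component is $K_i \times I$ with $K_i$ simply connected and cell-like, so collapsing along the $I$-factor to an edge $e_i$ is a cellular, homeomorphism-preserving move on the interior. Because the $K_i$ are cell-like, the quotient map does not change the homeomorphism type of $\open{X}$; the product blocks, trapezoids, and vertical edges all become edges of $\T^*$ exactly as described, and what remains after crushing is the truncated-tetrahedra opened up to tetrahedra. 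Hypothesis (i), $X \ne \bbb{P}(\C_X)$, guarantees that at least one truncated-tetrahedron survives, so $\boldsymbol{\td{\Delta}}^*$ is non-empty and we genuinely obtain a triangulation rather than collapsing all of $X$ to a graph. Finally I would confirm that the vertices of $\T^*$ have vertex-linking surfaces of genus $\ge 1$, so that $\T^*$ is honestly \emph{ideal}, matching the standing convention on ideal vertices.

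The main obstacle I expect is the second verification, controlling the homeomorphism type through the collapses of $\bbb{P}(\C_X)$. One must be careful that after crushing the components of $S$ to distinct points and collapsing the product region, no unintended identifications occur among the surviving tetrahedra and that the quotient is truly $\open{X}$ rather than some quotient of it. The cell-like and simple-connectivity conditions packaged into hypothesis (ii) are exactly the hypotheses that tame this, via the standard fact that collapsing cell-like sets in a manifold (here, on the interior) yields a homeomorphic space; but matching the combinatorial bookkeeping of the induced face-pairings to this topological collapse is the delicate step. Rather than reprove this from scratch, I would invoke the more general Theorem 4.1 of \cite{jac-rub-0eff}, verifying that conditions (i)--(iii) here imply the hypotheses of that theorem in the special case where $\bbb{P}(\C_X)$ is a trivial product, and then quote its conclusion.
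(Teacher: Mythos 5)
Your proposal is correct and ends up exactly where the paper does: the paper gives no independent proof of this theorem, stating only that ``a more general version and its proof appear as Theorem 4.1 in \cite{jac-rub-0eff},'' which is precisely the citation you fall back on after your sketch. Your preliminary accounting of how hypotheses (i)--(iii) enter (chains must terminate so the induced face-pairings are genuine pairings, cell-like collapses of the trivial product preserve the homeomorphism type of $\open{X}$, and non-triviality of the set of truncated-tetrahedra) is consistent with the paper's expository discussion of the crushing procedure in Section 3, so the proposal matches the paper's approach.
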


In this situation, we say the triangulation $\T$ admits a \emph{combinatorial crushing along $S$}. Notice that in the case of a combinatorial crushing, the tetrahedra in the ideal triangulation $\T^*$ are in one-one correspondence with the truncated tetrahedra in the cell decomposition $\C_X$ of $X$.  The latter collection of tetrahedra comes from truncating a sub collection of the tetrahedra of $\T$ and can be thought of as actually being a sub collection of the tetrahedra of $\T$; the face identifications for $\T^*$ are induced by the face identifications of $\T$.

\subsection{Inflating ideal triangulations}

A triangulation $\T$ of the compact $3$--manifold $M$ is said to be
a {\it minimal-vertex triangulation} if for any other triangulation
$\T_1$ of $M$ the number of vertices of $\T$ is no more than the
number of vertices of $\T_1$, $\abs{\T^{(0)}}\le\abs{\T_1^{(0)}}$.
If $M$ is closed, then $M$ has a one-vertex triangulation; hence, a minimal-vertex triangulation of $M$ is a
one-vertex  triangulation \cite{jac-rub-layered}. If $M$ is a
compact $3$--manifold with boundary, no component of which is a
$2$--sphere, then $M$ has a triangulation with all of its
vertices in the boundary and then just one vertex in each boundary
component (\cite{jac-rub-0eff}; hence, for such a manifold a minimal-vertex triangulation has all the vertices in the boundary and then just one vertex in each boundary component. These are the triangulations we are interested in and rather than write all of this out, we just say minimal-vertex triangulation. The proofs for these conclusions on minimal-vertex triangulations used here can be found in \cite{jac-rub-0eff} and are relevant to algorithms as they use crushing triangulations along normal surfaces, and result in fewer tetrahedra than the given triangulation. However, the layered triangulations for closed 3-manifolds, given in \cite{jac-rub-layered}, are one-vertex triangulations, one can easily show that any triangulation of a 3-manifold with boundary can be modified to one with all vertices in the boundary, then a standard "close-the-book" method reducing a triangulation of a closed surface not equal to $S^2$ to a one-vertex, minimal triangulation gives the general result for all 3-manifolds. 

 If $M$ is a compact 3--manifold with boundary and $\T$ is a triangulation of $M$ with normal boundary, then if $\T$ admits a crushing along the boundary-linking normal surface, we say $\T$ can be \emph{crushed along $\bdy M$}. 
 
 If $S$ is a triangulated surface, we say that a subcomplex $\xi$ in the 1--skeleton of the triangulation of $S$ is a \emph{frame} in $S$ if $\xi$ is a spine for $S$ (its complement is a connected open cell in $S$)  and is a minimum among spines, with respect to set inclusion.  In any triangulation of $S$ there are many choices for a frame.  See Figure \ref{f-frames} for examples of frames in a torus.   A vertex in a frame is called a \emph{branch point}  if it has index greater than two.  The closure of a component of a frame minus its branch points is called a \emph{branch}.  For the examples in Figure \ref{f-frames}, that on the left has one branch point of index 4 and two branches while that on the right has two branch points, each of index 3, and three branches.

\begin{figure}[htbp]

            \psfrag{L}{$\xi$}
        \vspace{0 in}
        \begin{center}
  \includegraphics[width=2.75 in]{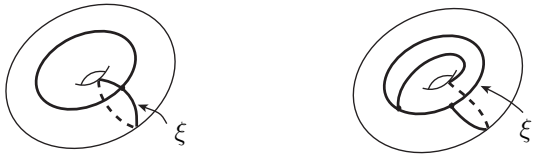}
\caption{There are 
only two possible topological types for frames in a triangulation of the torus.} \label{f-frames}
\end{center}
\end{figure}

 \begin{defn} If $\T^*$ is an ideal triangulation of $\open{X}$, the interior of the compact 3--manifold $X$, an \emph{inflation of $\T^*$} is a minimal-vertex triangulation $\T$ of $X$ with normal boundary that admits a \underline{combinatorial crushing} along $\bdy X$ for which the ideal triangulation obtained by crushing $\T$ along $\bdy X$ is the ideal triangulation $\T^*$ of $\open{X}$. \end{defn}

A construction for inflations of ideal triangulations of 3--manifolds is developed  in \cite{jac-rub-inflate}.  We discuss the construction here but reference the reader to \cite{jac-rub-inflate} for complete details. For a given ideal triangulation of the interior of a compact 3--manifold with boundary, there is not a unique inflation; however, all inflations of a given ideal triangulation share many common properties, some of which play a crucial role in this work.

Our construction begins with the choice of a ``frame" in the 1--skeleton of the induced triangulation of each vertex-linking surface of $\T^*$.  If $v^*$ is an ideal vertex of $\T^*$, we will use the notation $S_{v^*}$ for the vertex-linking surface of $v^*$ and $\xi$ for a frame in $S_{v^*}$. If there are a number of ideal vertices,  then for an ideal vertex $v_i^*$ we use $S_{v_i^*}$ for the vertex-linking surface and $\xi_i$ for a frame in $S_{v_i^*}$. We let $\Lambda = \xi_1\cup\xi_2\cup\cdots\cup\xi_k$ denote the union of the frames from all the vertex-linking surfaces.

 An inflation of an ideal triangulation $\T^*$ of $\open{X}$ includes all the tetrahedra of $\T^*$ and then, guided by the frame $\Lambda$, new tetrahedra are added to the tetrahedra of $\T^*$ and new  face identifications  are determined (discarding some of the face identifications of $\T^*$, using some of the face identifications of $\T^*$, and adding some new face identifications)  to arrive at a minimal-vertex triangulation $\T_\Lambda$ of $X$.  The triangulation $\T_\Lambda$ will have normal boundary that admits a combinatorial crushing of $\T_\Lambda$ along  $\bdy X$, crushing $\T_\Lambda$ back to the ideal triangulation  $\T^*$.  Figure \ref{f-inflate-scheme-lite-ann} provides a schematic for going between an ideal triangulation $\T^*$ of $\open{X}$ and a normal boundary, minimal-vertex triangulation $\T_\Lambda$ of $X$.

 \begin{figure}[htbp]

            \psfrag{X}{$(X,\T_\Lambda)$}\psfrag{Y}{$(\open{X},\T^*)$}
            \psfrag{1}{\tiny {$v_1^*$}} \psfrag{2}{\tiny {$v_2^*$}} \psfrag{3}{\tiny {$v_3^*$}}
            \psfrag{a}{\tiny {$\xi_1$}} \psfrag{b}{\tiny {$\xi_2$}} \psfrag{c}{\tiny {$\xi_3$}}
            \psfrag{x}{\tiny {$B_{\tiny {v_1}}$}} \psfrag{y}{\tiny {$B_{\tiny {v_2}}$}}\psfrag{z}{\tiny {$B_{\tiny {v_3}}$}}
            \psfrag{A}{\scriptsize {$S_{\tiny {v_1^*}}$}}\psfrag{B}{\scriptsize {$S_{\tiny {v_2^*}}$}}
            \psfrag{C}{\scriptsize {$S_{\tiny {v_3^*}}$}}
            \psfrag{u}{\scriptsize {$S_{\tiny {v_1}}$}}\psfrag{v}{\scriptsize {$S_{\tiny {v_2}}$}}
            \psfrag{w}{\scriptsize {$S_{\tiny {v_3}}$}}
            \psfrag{m}{\scriptsize
            {\bf Inflate}} \psfrag{n}{\scriptsize
            {\bf Crush}}
            \psfrag{L}{$\Lambda =\xi_1\cup\xi_2\cup\xi_3$}

        \vspace{0 in}
        \begin{center}
          \includegraphics[width=4.5 in]{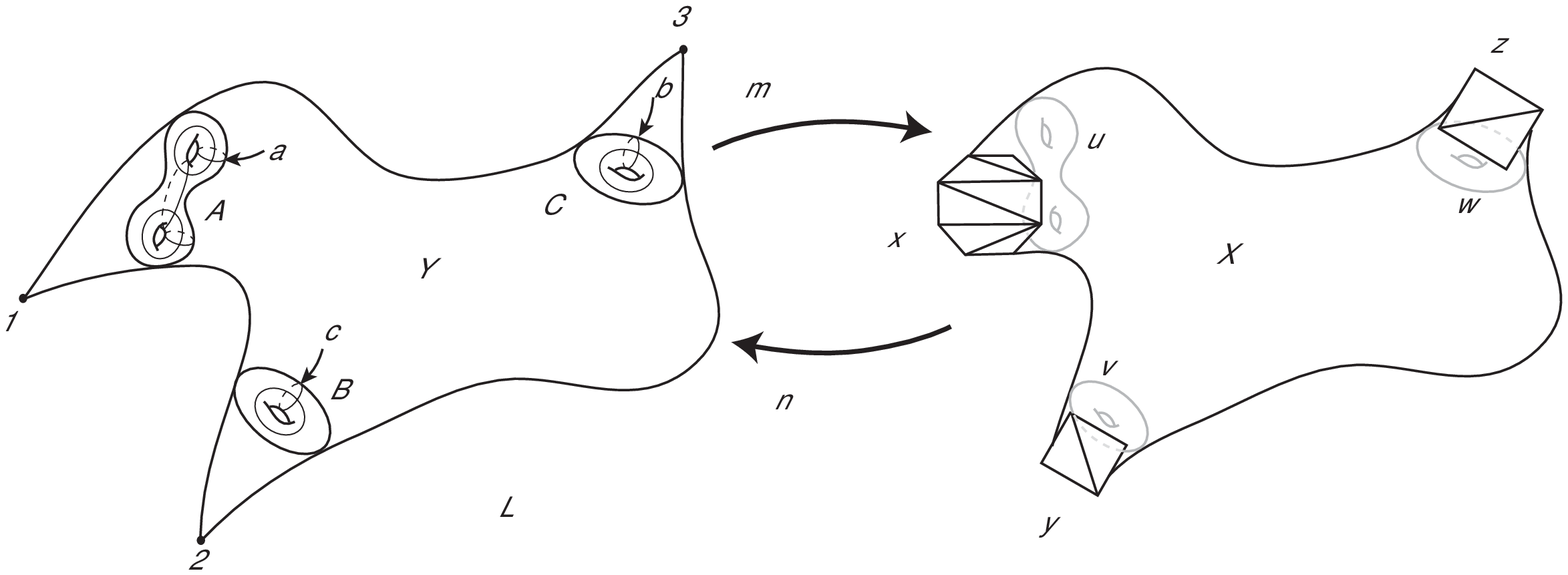}
        \caption{ Schematic of an inflation of an ideal triangulation using the collection of frames in $\Lambda =\xi_1\cup\xi_2\cup\xi_3$}
        \label{f-inflate-scheme-lite-ann}
\end{center}
\end{figure}

 An ideal vertex $v^*$ in $\T^*$ inflates to a minimal (one-vertex) triangulation of a component $B_v$ of $\bdy X$, which is induced by $\T_\Lambda$.   The vertex-linking surface $S_{v^*}$ about the ideal vertex $v^*$  inflates to a normal surface $S_v$ in the triangulation $\T_\Lambda$, which is boundary-linking $B_v$.

\subsection{Closed normal surfaces} In this section, we show there is a
bijective correspondence  between the closed normal surfaces
in an ideal triangulation $\T^*$ and the closed normal surfaces in
any inflation $\T$  of $\T^*$.  We provide the details for this in Theorem \ref{bijection-ideal-inflate} of
this section. 
A special case of this relationship
is a key ingredient for the work in this paper and relates  boundary
parallel normal surfaces in an inflation of an ideal triangulation
with normal surfaces parallel to vertex-linking surfaces in the
ideal triangulation. 

\begin{lem} Suppose $M$ is a compact $3$--manifold with nonempty
boundary and $\T$ is a triangulation of $M$ with normal boundary. An
embedded normal surface in $\T$ that contains all the quad types of
a boundary-linking surface has that boundary-linking surface as a
component.\end{lem}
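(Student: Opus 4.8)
The plan is to exploit the quadrilateral compatibility condition together with the product structure that the boundary-linking surface cobounds with $\bdy M$, and then to extract a parallel copy of that surface from the disks of $S$ lying closest to $\bdy M$. Throughout I would take $S$ to be a closed normal surface, as in this subsection, so that $S$ is disjoint from $\bdy M$.

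First I would record the structural input that does all the work: since $S$ is embedded and normal, each tetrahedron of $\T$ carries at most one type of normal quadrilateral. Let $B$ denote the boundary-linking surface. By hypothesis every quadrilateral type occurring in $B$ also occurs in $S$; combined with the compatibility condition this forces, in each tetrahedron in which $B$ has a quadrilateral, the surface $S$ to have quadrilaterals of \emph{exactly} the same type, since it cannot simultaneously carry a second type. Thus $S$ and $B$ have matching quadrilateral types tetrahedron-by-tetrahedron. Next I would describe $B$ concretely. As the frontier of a small regular neighborhood $N(\bdy M)\cong \bdy M\times I$, the surface $B$ cobounds this product region with $\bdy M$, and its normal disks are of two kinds: triangles parallel to the boundary faces of a tetrahedron, and quadrilaterals encircling a boundary edge $e$ in a tetrahedron whose two faces incident to $e$ are \emph{not} boundary faces. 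These quadrilaterals separate $e$ from its opposite edge and are therefore disjoint from $\bdy M$; around each boundary edge they form a fan capped at either end by boundary-parallel triangles in the tetrahedra whose faces at $e$ do lie in $\bdy M$, and globally these disks close up into $B$.

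Then comes the heart of the argument: I would reconstruct a copy of $B$ inside $S$ by an arc-matching propagation, selecting in each tetrahedron the disks of $S$ that are outermost toward $\bdy M$. The matched quadrilaterals of $S$ meet the faces of $\T$ in the same normal arcs as the corresponding quadrilaterals of $B$; since on each face a normal arc is shared by precisely the two disks lying on its two sides, these arcs propagate across faces and, where a quadrilateral fan abuts a boundary face, force $S$ to contain the associated boundary-parallel triangle. (Here closedness of $S$ is used: a quadrilateral in such an end tetrahedron would meet the boundary face, so the matching disk of $S$ must be the triangle.) Propagating through the fans around boundary edges and across the product region $N$, the outermost disks of $S$ assemble into a normal surface $B'$ whose disk types and multiplicities coincide with those of $B$. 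Being a union of normal disks of $S$ that forms a closed surface, $B'$ is a union of components of $S$, and being combinatorially the frontier of $N(\bdy M)$ it is the boundary-linking surface $B$ itself; hence $B$ is a component of $S$.

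The main obstacle is making the propagation in the last step rigorous. One must verify that the arc-matching never forces two incompatible disks of $S$ in a single tetrahedron, where the quadrilateral condition is exactly what rules this out; that the selected disks are genuinely outermost, so that a single parallel copy of $B$ is peeled off rather than several copies fused together; and that the local pieces close up globally into the full boundary-parallel surface rather than terminating prematurely. Controlling this will require the product structure of $N$ and careful outermost bookkeeping in the fan of tetrahedra around each boundary edge.
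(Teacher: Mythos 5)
Your opening step is correct (embeddedness forces $S$ to carry exactly the quad type of $B$ in every tetrahedron where $B$ has a quad), and your fan analysis around a boundary edge is sound: a closed normal surface cannot touch a boundary edge or boundary face, so its quads encircling a boundary edge $e$ must chain through the fan of tetrahedra around $e$ and cap off with triangles in the two end tetrahedra. But there is a genuine gap exactly where you flag "the main obstacle," and it is not a routine verification: the hypothesis constrains the quad types of $S$ \emph{only} in tetrahedra where $B$ itself has a quad. In a tetrahedron $\Delta$ that meets $\bdy M$ in a single vertex $x$ (a case your description of $B$ omits -- $B$ then has a triangle linking $x$, not a boundary-face-parallel triangle), $S$ is a priori free to carry any one quad type, for instance quads around an \emph{interior} edge $xp$. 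Such a quad has a normal arc cutting off $x$ on the face $f$ that $\Delta$ shares with a fan tetrahedron of a boundary edge at $x$ -- precisely the arc type your propagation follows. Nothing in your argument prevents the innermost quad around $e$ from gluing across $f$ to this foreign quad rather than to the triangle that $B$ has there (this is legal locally for an embedded closed normal surface), after which the propagation leaves the combinatorics of $B$ entirely; likewise your "outermost toward $\bdy M$" selection would then pick that quad. Ruling this configuration out -- i.e., showing that in every tetrahedron where $B$ has a triangle, $S$ actually contains a triangle of the same type in the matching position -- is essentially the entire content of the lemma, and the proposal supplies no mechanism for it. Note also that you restrict to closed $S$, whereas the lemma is stated for arbitrary embedded normal surfaces, and your end-tetrahedron step genuinely needs closedness.

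For comparison, the paper sidesteps all local analysis with a short solution-space argument: since the quad support of $\td{B}$ is contained in that of $S$, both lie in the carrier of $S$, so there are a normal surface $R$ and positive integers $k,n,m$ with $kS = nR + m\td{B}$; and since $\td{B}$ is the frontier of an arbitrarily small neighborhood of $\bdy M$, it can be normally isotoped off $R$, so this Haken sum is a disjoint union and $\td{B}$ is a component of $kS$, hence of $S$. The geometric fact that the boundary-linking surface can be pushed off any fixed normal surface toward $\bdy M$ is what does the work your propagation is meant to do; without that (or the algebra of the solution space), the combinatorial approach does not close up.
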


\begin{proof} Suppose $S$ is an embedded normal surface in $\T$, $\td{B}$
is a boundary-linking surface, and all quad types of $\td{B}$ are
represented as quad types in $S$. Then $S$ and $\td{B}$ are
contained in the carrier of $S$, a face of compatible (no two
distinct quad types in the same tetrahedron) normal solutions in the
solution space of embedded normal surfaces. It may be the case that
 $\td{B}$ is in a proper
face. Since, $\td{B}$ has no more quad types that $S$, it follows
that there is a normal surface $R$ and positive integers $k,n,$ and
$m$ so that $kS = nR + m\td{B}$. However, we can move $\td{B}$ by a
normal isotopy so that it does not meet $R$. Hence, we have $\td{B}$
a component of $kS$ and therefore a component of $S$.\end{proof}

\begin{lem}\label{crush normal to normal} Suppose $M$ is a compact $3$--manifold with nonempty boundary, no component of which is a 2--sphere. Suppose
 $\T^*$ is an ideal triangulation of
$\open{M}$, and $\T$ is an inflation of $\T^*$.  The combinatorial crushing map determined by crushing $\T$ along $\bdy M$ takes a closed normal
surface $S$ in $\T$  to a closed normal surface $S^*$ in
$\T^*$; furthermore, $S$ and $S^*$ are homeomorphic.
\end{lem}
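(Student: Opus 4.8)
The plan is to analyze the combinatorial crushing map cell-by-cell and track what happens to a closed normal surface $S$ under the crushing of $\T$ along $\partial M$. Recall from Theorem \ref{combinatorial-crush} and the surrounding discussion that the crushing is guided by the cell-decomposition $\C_X$ of the region $X$ between $S$ and the boundary-linking surface; under a combinatorial crushing the tetrahedra of $\T^*$ correspond exactly to the truncated-tetrahedra of $\C_X$, and each hexagonal face of a truncated-tetrahedron is crushed to a triangular face of the corresponding ideal tetrahedron $\td\Delta_i^*$. The first step is therefore to examine how $S$ sits relative to the cells of the crushing. Since $S$ is closed and normal in $\T$, its intersection with each tetrahedron of $\T$ is a union of normal triangles and quadrilaterals; I would track each such disc through the crushing map to see that it is carried to a normal disc (triangle or quad) of the same combinatorial type in the appropriate ideal tetrahedron $\Delta_i^*$, so that the image $S^*$ is a well-defined normal surface in $\T^*$.

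The key structural point is that the crushing only collapses the boundary-linking surface $\td B$ to ideal vertices and collapses the designated product regions (the vertical edges, trapezoids, and product blocks, i.e.\ the components $K_i\times I$) to edges. So I would separate the tetrahedra of $\T$ into those coming from truncated-tetrahedra of $\C_X$ (the ``essential'' tetrahedra that survive as ideal tetrahedra) and those appearing in the product region or in chains of truncated-prisms (which collapse). Because $S$ is a \emph{closed} normal surface disjoint from the ideal vertices, and because $S^* $ is to be a \emph{closed} normal surface in $\T^*$, the core of the argument is that $S$ meets the product region only in the ``vertical'' pieces that get squeezed along the $I$-factor, so these intersections collapse without creating or destroying topology — they just record the identifications that glue the normal discs of $S^*$ across the faces $\td\sigma_i^*$. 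This is exactly where the hypothesis that $\T$ is an \emph{inflation} (admitting a combinatorial crushing, with a trivial combinatorial product and no cycles of truncated-prisms) is used: it guarantees each $K_i$ is simply connected and cell-like, so crushing $K_i\times I$ to an arc is a homotopy equivalence on the nose and carries the strip of $S$ running through it to a single normal arc identification rather than to anything with nontrivial topology.

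With the image $S^*$ identified as a normal surface in $\T^*$, the remaining step is to establish the homeomorphism $S\cong S^*$. The plan is to realize the crushing map, restricted to $S$, as a quotient that collapses only a collection of disjoint cell-like (collapsible) pieces of $S$ — precisely the arcs and sub-discs of $S$ lying in the crushed product regions. Since each collapsed piece is contractible and the collapsing is a product collapse $K_i\times I \to I$ along the $I$-factor, the restriction of the crushing to $S$ is a cellular map each of whose point-preimages is a point or a collapsible subcomplex; by the standard fact that collapsing disjoint cell-like subsets of a surface yields a homeomorphic surface, $S$ and $S^*$ are homeomorphic. I expect the main obstacle to be the bookkeeping in this last step: one must verify carefully that $S$ does not run ``longitudinally'' through a product block in a way that would be collapsed to a point and thereby change the surface's topology, and that no component of $S$ is swallowed entirely into the product region (which would make $S^*$ miss a component). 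Ruling this out is exactly the content of the previous lemma and the trivial-product hypothesis: a closed normal surface cannot be absorbed into the combinatorial product without being boundary-linking, and the boundary-linking surface itself is what is being crushed to the ideal vertices, so any closed $S$ not equal to $\td B$ survives with its homeomorphism type intact.
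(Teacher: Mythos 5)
Your outline is in fact the paper's own proof: analyze the crushing map cell-by-cell (only the normal discs of $S$ lying in truncated-tetrahedra survive as normal discs of $S^*$, everything else collapses), then conclude $S\cong S^*$ by observing that the restriction of the crushing map to $S$ has cell-like point-preimages and invoking a two-dimensional cell-like decomposition theorem. However, two of your supporting assertions are wrong as stated. First, a small one: $X$ is the closure of the component of the complement of the boundary-linking surface not meeting $\bdy M$ (so $S$ sits inside $X$); it is not ``the region between $S$ and the boundary-linking surface.'' Second, and more seriously, your description of how $S$ meets the product region is backwards. Since the crushing projection on a product component $K_i\times I$ coincides with projection onto the $I$-factor, the \emph{vertical} pieces map \emph{onto} edges of $\T^*$ and are not squeezed at all; it is the \emph{horizontal} cells that are crushed to points. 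A closed normal surface meets the triangular and quadrilateral product blocks precisely in horizontal cells, each lying in a cross-section $K_i\times\{t\}$, and these crush to points in edges of $\T^*$; the cells of $S$ in truncated-prisms crush to normal arcs in faces of $\T^*$; only the cells in truncated-tetrahedra survive as discs (so your first paragraph's claim that \emph{each} normal disc of $S$ is carried to a normal disc of the same type is also too strong). This is not a cosmetic slip: the homeomorphism step needs exactly this structure for the point-preimages --- the preimage of an interior point of a cell of $S^*$ is a point, the preimage of a point on an edge of $S^*$ is a point or an arc obtained by concatenation through a chain of truncated-prisms (an arc, not a circle, because there are no cycles of truncated-prisms), and the preimage of a vertex of $S^*$ is a full horizontal cross-section $K_i\times\{t\}$, a contractible planar complex. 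With ``vertical'' pieces the restricted map would not be cell-like in this way and the quotient argument would not go through as described.

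Your worry about a component of $S$ being swallowed entirely by the product region is legitimate, but your proposed resolution cites the wrong result: the preceding quad-type lemma plays no role here and does not address it. What rules it out is the trivial-product hypothesis by itself: each $K_i$ is a simply connected \emph{planar} complex, and any component of $S$ contained in $K_i\times I$ would consist of horizontal cells, hence lie in (indeed equal) a single cross-section $K_i\times\{t\}$; a closed surface cannot be a subcomplex of a planar complex, so no closed component of $S$ can lie in the product region. With these two corrections --- horizontal, not vertical, cells collapsing, and the planarity of the $K_i$ replacing the appeal to the quad-type lemma --- your argument coincides with the paper's proof.
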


\begin{proof}  Let $X$ denote the component of the complement of the
boundary-linking surfaces that does not meet $\bdy M$. Then $X$
contains none of the vertices of $\T$ and has a nice
cell-decomposition $\mathcal{C}$; furthermore, this
cell-decomposition combinatorially crushes along the boundary-linking surfaces to
the ideal triangulation $\T^*$.

Let $S$ be a closed normal surface in $\T$. The surface $S$ has an induced cell-decomposition from $\T$
consisting of normal quadrilaterals and normal triangles. Since $S$
is a closed normal surface, we may assume $S$ does not meet any of
the boundary-linking surfaces along which we are crushing, and thus
$S\subset X$.

If a normal quad or normal triangle of $S$ is in a
truncated-tetrahedron in $\C$, then upon crushing, the
truncated-tetrahedron is taken to a tetrahedron of $\T^*$ and the
normal cells of $S$ in the truncated-tetrahedron are carried
isomorphically onto normal cells in $\T^*$ (see Figure
\ref{f-crush-normal} A). If a normal quad or normal triangle of $S$
is in a truncated-prism of $\C$, then the truncated-prism is crushed
to a face in $\T^*$ and the normal cells of $S$ are crushed to
normal arcs in that face. The normal arcs in the hexagonal faces of
the truncated-prisms correspond to where $S$ meets these hexagonal faces and are matched under the crushing map from the various truncated prisms in a chain of truncated prisms. Arcs in
the trapezoidal faces of the truncated-prism crush to points in the edges of the face in which the truncated prism crushes (see
Figure \ref{f-crush-normal} B). Finally, the normal cells of $S$ in
the product blocks of $\C$ are ``horizontal" triangles in the
triangular product blocks and ``horizontal" quadrilaterals in the
quadrilateral blocks and, hence, each is crushed to a single point
in an edge of $\T^*$ (see
Figure \ref{f-crush-normal} C). The crushing in the trapezoidal faces of the
truncated-prisms and the product blocks are consistent. It follows
that the image of $S$ is formed from the collection of normal
triangles and normal quadrilaterals of $S$ that are in the
truncated-tetrahedron of $\C$ by identifications along their edges
and gives a normal surface $S^*$ in $\T^*$.

 \begin{figure}[htbp]

        \vspace{0 in}
        \begin{center}
        \epsfxsize=3.5 in
        \epsfbox{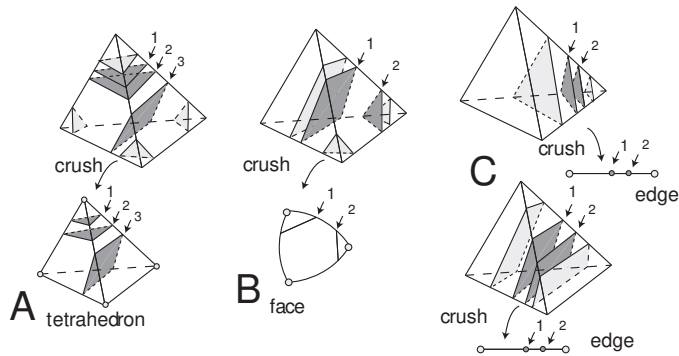}\caption{A.Normal disks in truncated tetrahedra go to normal disks. B.Normal disks in truncated prisms go to normal arcs. C.Normal disks in product blocks go to points. }
        \label{f-crush-normal}
\end{center}
\end{figure}

To see that $S$ and $S^*$ are homeomorphic, we observe that the
inverse image of a point in the interior of a normal quad or normal
triangle in $S^*$ is a point in the interior of a normal quad or
normal triangle in $S$. The inverse image of a point in an edge of
$S^*$ is either a point in an edge of $S$ or a sequence of arcs in normal quads or normal triangles of $S$; there are no
cycles of truncated prisms and so no cycles of cells of $S$ in
truncated-prisms. The inverse image of a vertex of $S^*$ is a horizontal cross section $K_i\times {t}$ in one of the component product pieces $\mathbb{P}_i = K_i\times I$ of the combinatorial product $\mathbb{P}(\C_X)$. Hence, $K_i$ is a contractible planar complex.  Thus for each point of $\S^*$ its inverse image in $S$ is a contractible planar complex and so the combinatorial crushing map gives a cell-like map from $S$ to $S^*$ and by a 2-dimensional versions of \cite{arm, sie} it follows that $S$ and $S^*$ are homeomorphic.
\end{proof}

\begin{thm} \label{bijection-ideal-inflate} Suppose $M$ is a compact 3--manifold with nonempty boundary no component of which is a 2--sphere. Suppose
 $\T^*$ is an ideal triangulation of
$\open{M}$, and $\T$ is an inflation of $\T^*$.  The combinatorial crushing map determined by crushing $\T$ along $\bdy M$ induces a bijection between the closed normal
surfaces  in $\T$  and the closed normal surface in
$\T^*$; furthermore, corresponding surfaces are homeomorphic.\end{thm}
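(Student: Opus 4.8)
**

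The plan is to establish the bijection by exhibiting a two-sided inverse to the crushing map on closed normal surfaces. Lemma~\ref{crush normal to normal} already supplies one direction: crushing $\T$ along $\bdy M$ sends each closed normal surface $S$ in $\T$ to a closed normal surface $S^*$ in $\T^*$, with $S$ and $S^*$ homeomorphic. What remains is to show this assignment $S \mapsto S^*$ is a bijection, and the natural route is to construct the inverse map explicitly via the inflation procedure and then verify the two compositions are the identity.

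First I would define the candidate inverse. Given a closed normal surface $S^*$ in $\T^*$, recall that the inflation $\T$ contains all the tetrahedra of $\T^*$, with new tetrahedra added in the product/block regions guided by the frame $\Lambda$. A closed normal surface in $\T^*$ is determined by its normal coordinates (the triangle and quadrilateral counts in each ideal tetrahedron). Since the tetrahedra of $\T^*$ sit inside $\T$ as (the crushed images of) the truncated-tetrahedra of $\mathcal C$, I would ``lift'' $S^*$ by declaring its normal cells in each truncated-tetrahedron to match the normal cells of $S^*$ in the corresponding tetrahedron of $\T^*$, and then propagate these cells through the chains of truncated-prisms and the product blocks in the unique horizontal way consistent with the matching conditions. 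Because there are no cycles of truncated-prisms (condition iii of the combinatorial crushing), each chain terminates at truncated-tetrahedra on both ends, so the normal arcs prescribed on the two terminal hexagonal faces determine a consistent family of horizontal arcs across the chain; similarly the trivial combinatorial product structure (condition ii) means each product block $K_i\times I$ carries exactly the horizontal cross-sections needed. This produces a well-defined normal surface $S$ in $\T$ with a single quad/triangle assignment, and it is closed since the horizontal fillings never touch the boundary-linking surfaces.

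Next I would check that the two constructions are mutually inverse. For $S \mapsto S^* \mapsto \widehat S$, Lemma~\ref{crush normal to normal} shows the crushing collapses the chains and product blocks precisely to arcs and points, so the normal data of $S$ in the truncated-tetrahedra is exactly recovered, and the horizontal propagation rebuilds the same cells in the chains and blocks that $S$ already had (uniqueness of the horizontal filling is what forces $\widehat S = S$). For $S^* \mapsto S \mapsto \widehat{S^*}$, the crushing map restricted to the truncated-tetrahedra is an isomorphism onto the tetrahedra of $\T^*$, so it returns exactly the normal cells we started with, giving $\widehat{S^*} = S^*$. The homeomorphism claim for corresponding surfaces is then immediate from Lemma~\ref{crush normal to normal}.

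The main obstacle I expect is the well-definedness and uniqueness of the horizontal propagation through the chains of truncated-prisms and the product blocks --- that is, verifying that the normal arcs prescribed at the two ends of a chain by $S^*$ extend to a unique consistent horizontal family across all intermediate prisms, and that this is forced (not merely permitted) by the normal matching conditions. This is exactly where hypotheses (ii) and (iii) of Theorem~\ref{combinatorial-crush} are used, and where the absence of cycles guarantees that no global monodromy obstructs the filling. Once this local-to-global consistency is secured, the bijection and the homeomorphism statement follow formally.
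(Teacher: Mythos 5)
Your proposal is correct and takes essentially the same approach as the paper: after invoking Lemma~\ref{crush normal to normal}, the paper establishes the bijection using exactly your key facts (normal data in truncated tetrahedra is carried isomorphically, and the filling through chains of truncated prisms and product blocks is uniquely forced by the matching conditions and the absence of prism cycles), only packaged as separate injectivity and surjectivity arguments rather than as a two-sided inverse. Your verification that lift followed by crush is the identity is precisely the paper's injectivity step, and your lifting construction is its surjectivity step.
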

\begin{proof} By Lemma \ref{crush normal to normal} we only need to show that the combinatorial crushing induces a bijection between the closed normal surfaces of $\T$ and those of $\T^*$.  

First we shall show that the correspondence is injective. Suppose $S_1$ and $S_2$ are distinct closed normal surfaces in $\T$. Since both are closed, we may assume that (up to normal isotopy) they do not meet any boundary-linking surface in $\T$.  Let $X$ denote the component of the complement of the boundary-linking normal surfaces in $\T$, which does not meet $\bdy M$ and let $\C_X$ denote the nice cell decomposition on $X$ induced by $\T$.   Then $S_1$ and $S_2$ are distinct normal surfaces in $\C_X$  and hence, have distinct normal coordinates. By Lemma \ref{crush normal to normal} the combinatorial crushing of $\T$ to $\T^*$ takes $S_1$ and $S_2$ to closed normal surfaces $S_1^*$ and $S_2^*$ in $\T^*$, respectively.   We will show that $S_1^*$ and $ S_2^*$ have distinct normal coordinates.

If $S_1$ and $S_2$ have distinct sets of normal disks in a truncated tetrahedron of $\C_X$, then $S_1^*$ and $S_2^*$ have distinct normal disks in a tetrahedron of $\T^*$ and hence, $S_1^* \ne S_2^*$. 

If $S_1$ and $S_2$ have distinct normal disks in a truncated prism, say $\pi$, then they have distinct sets of normal arcs in a hexagonal face of $\pi$, which extend to distinct sets of normal arcs on all the hexagonal faces of the truncated prisms in the chain of truncated prisms containing $\pi$, which leads to a distinct set of normal disks in a truncated tetrahedron in which the chain terminates. This again  gives that $S_1^* \ne S_2^*$.  Finally if $S_1$ and $S_2$ have a distinct number of quads or triangles in a quadrilateral or triangular block, respectively, then $S_1$ and $S_2$ meet an entire product component, $K_i\times I$ in a distinct number of horizontal slices. The vertical frontier of a product $K_i\times I$ is made up of trapezoidal faces which are paired with trapezoidal faces of truncated prisms. Thus we have that $S_1$ and $S_2$ must meet a truncated prism in distinct normal disks. From the previous consideration, we have that $S_1^*$ and $S_2^*$ are distinct.  So, the correspondence is injective.

Now, we must show the correspondence is surjective. Suppose $S^*$ is a closed normal surface in $\T^*$.  First, we consider how, $S^*$ meets a tetrahedron of $\T^*$. Each tetrahedron of $\T^*$ is the image of a single truncated tetrahedron of $\C_X$ under the crushing map; hence, there is a unique choice of normal cells in these truncated tetrahedra of $\C_X$ (tetrahedra of $\T$) mapping to the normal cells of $S^*$. If $\alpha^*$ is a face of a tetrahedron of $\T^*$ and $\alpha^*$ meets $S^*$, then the inverse image of $\alpha^*$ is either a single face between two truncated tetrahedra in $\C_X$ or is the image of a chain of truncated prisms in $\C_X$ between two truncated tetrahedra in $\C_X$. If the inverse image of $\alpha^*$ is a single face matching two truncated tetrahedra, then there are well determined normal cells in each of these truncated tetrahedra determined by the normal cells in $S^*$. If there is a chain of truncated prisms determined by $\alpha^*$, then of the three possible families of normal arcs in $\alpha^*$, only one of the families determines quadrilaterals in any one of the truncated prisms in the chain determined by $\alpha$.  This again determines a unique way to fill in normal disks extending the normal disks in the truncated tetrahedra. Finally, for each product $K_i\times I$ there is a unique number of horizontal slices determined to complete a normal surface $S$ in $\T$ that crushes to $S^*$.
\end{proof}

Recall that any  ideal triangulation of the interior of a compact 3--manifold M with boundary, no component of which is a 2--sphere, has numerous inflations. By the previous theorem, all of these inflations have isomorphic sets of closed normal surfaces, which are, in turn,  isomorphic with the closed normal surfaces of the given ideal triangulation. Here we use isomorphic to mean a bijection between the sets of normal surfaces where corresponding surfaces are homeomorphic.

\begin{cor}\label{vertex-link is bdry-link} Suppose $M\ne \mathbb{B}^3$ is a compact, irreducible and
$\bdy$-irreducible $3$--manifold with nonempty boundary. Suppose
 $\T^*$ is an ideal triangulation of
$\open{M}$, and $\T$ is an inflation of $\T^*$.  There is a closed normal
surface in $\T$  isotopic into  $\bdy M$ but not normally isotopic into $\bdy M$ if and only if there is
 a closed normal surface in $\T^*$ that
is isotopic into a vertex-linking surface but is not normally isotopic into a  vertex-linking surface.\end{cor}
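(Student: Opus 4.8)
The plan is to reduce the statement to the bijection of Theorem~\ref{bijection-ideal-inflate} and then match the two pairs of properties under it. First I would check that the hypotheses apply: since $M$ is irreducible and $M\ne\mathbb{B}^3$, no component of $\bdy M$ is a $2$--sphere, so Theorem~\ref{bijection-ideal-inflate} supplies a bijection $\Phi$ between the normal--isotopy classes of closed normal surfaces in $\T$ and those in $\T^*$ with $\Phi(S)$ homeomorphic to $S$. Each of the four properties occurring in the statement is invariant under normal isotopy, so it suffices to show that under $\Phi$ the property \emph{isotopic into $\bdy M$} corresponds to \emph{isotopic into a vertex--linking surface} and that \emph{normally isotopic into $\bdy M$} corresponds to \emph{normally isotopic into a vertex--linking surface}. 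Given these two correspondences, a closed normal surface satisfies the first property but not the second in $\T$ exactly when its image under $\Phi$ does so in $\T^*$, and the biconditional follows.

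The normal--isotopy correspondence is the easier half. As recorded in the inflation construction of Section~\ref{crushing-inflations}, the vertex--linking surface $S_{v^*}$ of $\T^*$ inflates to the boundary--linking surface of $\T$; equivalently, pushing the boundary--linking surface $\td{B}$ slightly into $X$ (the component of the complement of the boundary--linking surfaces missing $\bdy M$) and crushing returns $S_{v^*}$, so $\Phi(\td{B})=S_{v^*}$. Since $\Phi$ is a bijection of normal--isotopy classes, $S$ is normally isotopic to $\td{B}$ if and only if $\Phi(S)$ is normally isotopic to $\Phi(\td{B})=S_{v^*}$, which is the required correspondence of the two \emph{normally isotopic} properties.

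The topological correspondence is the crux, and I would establish it through the product region that detects parallelism. If $S$ is isotopic into $\bdy M$ then $S$ is parallel to a boundary component $\bdy_i M$ and cobounds with it a product region $R\cong\bdy_i M\times I$; $\bdy$--irreducibility guarantees this region is an honest product. Arranging $S\subset X$, the boundary--linking surface $\td{B}_i=\bdy_i X$ lies in $R$ and cuts off the collar between it and $\bdy_i M$, leaving a product $R'\cong\td{B}_i\times I\subset X$ cobounded by $S$ and $\bdy_i X$. The combinatorial crushing along $\bdy M$ collapses $\bdy_i X=\td{B}_i$ to the ideal vertex $v_i^*$ and carries $R'$ to a product neighborhood of $v_i^*$ in $\open{M}$, exhibiting $\Phi(S)$ as parallel to the vertex--linking surface $S_{v_i^*}$, that is, isotopic into a vertex--linking surface. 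Running the same argument through the inflation construction yields the converse, since a product neighborhood exhibiting $\Phi(S)$ as parallel to $S_{v^*}$ inflates to a product region exhibiting $S$ as parallel to $\bdy_i M$.

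I expect the main obstacle to be the last step of the preceding paragraph: verifying that the crushing map carries the boundary--parallel product region $R'$ to an end--parallel region and that this is reversed by inflation. The subtlety is that $R'$ need not be aligned with the product blocks of $\C_X$, so one must argue topologically that the cell--like collapse sending $\bdy_i X$ to the ideal vertex takes a collar of $\bdy_i X$ to a collar of the end of $\open{M}$; here one uses that the identification $\open{M}=\open{X}$ carries the end of $\open{M}$ onto $\bdy_i M$ and that $\bdy$--irreducibility forces the relevant regions to be genuine products. Once this topological equivalence between boundary--parallelism in $M$ and end--parallelism in $\open{M}$ is secured, combining it with the normal--isotopy correspondence of the second paragraph completes the proof.
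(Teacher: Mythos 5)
Your proposal is correct and takes essentially the same route as the paper: reduce to the bijection of Theorem \ref{bijection-ideal-inflate}, observe that the crushing map carries boundary-linking surfaces to vertex-linking surfaces, and transfer the boundary-parallel product region across the crushing correspondence. The paper compresses your third and fourth paragraphs into the single assertion that closures of complementary components of corresponding surfaces are identified, and are homeomorphic, under the combinatorial crushing map, which is exactly the fact your final paragraph flags as the main obstacle.
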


\begin{proof}  Suppose $S$ and $S^*$ are closed normal surfaces in $\T$ and $\T^*$, respectively, that correspond under the combinatorial crushing map taking $\T$ to $\T^*$. Then the closure of the components of the complement of $S$ have a correspondence under the combinatorial crushing map and corresponding components are homeomorphic.  Also, we have that the correspondence under the combinatorial crushing map  takes boundary-linking normal surfaces in $\T$ to vertex-linking normal surfaces in $\T^*$. Hence, if $S$ is isotopic into $\bdy M$, $S$ is isotopic to a boundary-linking surface and so, $S^*$ is isotopic to a vertex-linking surface in $\T^*$. The converse also follows.
\end{proof}

\section{efficient triangulations}
In this section we define and study annular-efficient triangulations and boundary-efficient triangulations ($\partial$-efficient  triangulations)
 of bounded 3-manifolds.  These results build on the study of $0$-efficient triangulations in \cite{jac-rub-0eff} and  \cite{jac-rub-layered} .

If $M$ is a $3$--manifold and $\T$ is a triangulation of $M$,  we say the
triangulation $\T$  is {\it $0$-efficient} if
\begin{enumerate}\item[(i)] $M$ is closed and the only normal
$2$--spheres are vertex-linking; or \item[(ii)] $\bdy M\ne\emptyset$
and the only normal disks  are vertex-linking.\end{enumerate}

Similarly, if $\T^*$ is an ideal triangulation of $\open{M}$, the interior of a compact 3-manifold, $\T^*$ is {\it $0$-efficient} 
\begin{enumerate}

\item[(iii)]  if there are no normal 2-spheres in $\T^*$.\end{enumerate}

It is shown in  Proposition 5.1 and Proposition 5.15 of
\cite{jac-rub-0eff} that if $\T$ is a $0$-efficient triangulation
of the compact $3$-manifold $M$ or $\partial M\ne\emptyset$  and  $\T^*$ an ideal triangulation of $\open{M}$, then for
\begin{enumerate}\item[(i)] $M$ closed,  then $M \ne \rppp$, is irreducible,
 and $\T$ has only one vertex, or $M=S^3$ and $\T$ has precisely two
 vertices.
\item[(ii)] $\bdy M\ne\emptyset$, then $M$ is irreducible,
$\bdy$-irreducible, there are no normal $2$-spheres, all the
vertices are in $\bdy M$, and there is precisely one vertex in each
boundary component, or $M = \mathbb{B}^3$. \item[(iii)] $\bdy M\ne\emptyset$ and $\open{M}$ is irreducible.\end{enumerate}

The following two results are from \cite{jac-rub-0eff} and show that the necessary topological conditions coming from a 0-efficient triangulation for a 3--manifold are sufficient to algorithmically construct a $0$-efficient triangulation for such manifolds.

\begin{thm} [Theorem 5.5, \cite{jac-rub-0eff}] If $M$ is a closed, orientable, irreducible
$3$--manifold distinct from $\rppp$ and $L(3,1)$,  then there is an algorithm that will modify any triangulation of $M$  to a $0$--efficient
triangulation.\end{thm}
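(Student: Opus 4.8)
The plan is to run a greedy crushing algorithm: starting from an arbitrary triangulation $\T$ of $M$, detect a non--vertex--linking normal $2$--sphere whenever one exists, crush along it to produce a triangulation of the same manifold with strictly fewer tetrahedra, and iterate until no such sphere remains. First I would recall that, for a closed manifold, $\T$ fails to be $0$--efficient exactly when it carries an embedded normal $2$--sphere that is not vertex--linking. The existence of such a sphere is decidable by classical normal surface theory: it suffices to examine the finitely many vertex (equivalently, fundamental) normal surfaces in the projective solution space and to test each $2$--sphere among them for being vertex--linking. This makes each stage of the procedure effective and is what turns the argument into an \emph{algorithm}.

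The engine of the argument is the operation of crushing $\T$ along a normal $2$--sphere $S$, which yields a triangulation $\T'$ of a (possibly disconnected) closed $3$--manifold $M'$. The key input is the controlled--topology statement for crushing established in \cite{jac-rub-0eff}: $M$ is recovered from the components of $M'$ by taking connected sums of those components together with connected sums with finitely many copies of $\rppp$, $L(3,1)$, $S^2\times S^1$, and $S^2\tilde{\times}S^1$. The same combinatorial bookkeeping on the induced cell decomposition into truncated--tetrahedra, truncated--prisms, and product blocks (the picture underlying Theorem \ref{combinatorial-crush}) shows that crushing along a non--vertex--linking $S$ strictly decreases the number of tetrahedra, since at least the cells meeting $S$ in quadrilaterals collapse.

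Next I would feed in the hypotheses. Since $M$ is irreducible it is prime, and since $M$ is orientable and distinct from $\rppp$ and $L(3,1)$ — an irreducible orientable manifold being neither $S^2\times S^1$ nor $S^2\tilde{\times}S^1$ — none of the four exceptional summands can appear in the connected--sum description of $M$ without violating primeness. Hence exactly one component of $M'$ must be homeomorphic to $M$ and every other component must be $S^3$. The $S^3$ components are recognizable and may be discarded (using $3$--sphere recognition, or the explicit identification of the crushed pieces in \cite{jac-rub-0eff}), leaving a triangulation of $M$ itself with strictly fewer tetrahedra. Because the tetrahedron count is a positive integer that strictly decreases at each crush, the process terminates after finitely many steps, and at termination the triangulation carries no non--vertex--linking normal $2$--sphere, i.e., it is $0$--efficient.

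The main obstacle is the crushing theorem itself, not the bookkeeping around it. The delicate part is verifying that crushing alters the topology \emph{only} by the listed connected summands, that precisely $\rppp$ and $L(3,1)$ (together with the two $S^2$--bundles, which irreducibility and orientability already exclude) are the manifolds that force the exceptions in the statement, and that the operation genuinely reduces the number of tetrahedra rather than stalling. Establishing this requires the full case analysis of how $S$ meets the cells of $\C_X$ and how the complementary product regions and chains of truncated--prisms collapse — the combinatorial heart carried out in \cite{jac-rub-0eff} and \cite{jac-rub-layered}; once that is in hand, the termination and correctness of the algorithm follow as sketched above.
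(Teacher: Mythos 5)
Your route is the same as the actual proof in \cite{jac-rub-0eff} (the present paper only quotes this theorem): detect a non-vertex-linking normal $2$--sphere among the finitely many vertex normal surfaces, crush along it using the controlled-topology crushing theorem (Theorem 4.1 of \cite{jac-rub-0eff}), recover $M$ from the crushed pieces via uniqueness of prime decomposition, and terminate because the tetrahedron count strictly decreases. One small repair first: the detection lemma is stated too loosely. What is actually true is that a non-$0$-efficient triangulation of a closed orientable $3$--manifold has a \emph{vertex} normal surface that is either a non-vertex-linking $2$--sphere or a one-sided $\rpp$ (whose double is such a sphere). Under the hypotheses the $\rpp$ alternative cannot occur---an orientable irreducible manifold containing an embedded $\rpp$ is $\rppp$, which is excluded---but this needs to be said, since otherwise ``check the vertex surfaces for spheres'' is not a complete test; also, ``vertex'' and ``fundamental'' surfaces are not equivalent notions, though either finite list works here.

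The genuine gap is the case $M\cong S^3$, which the statement does \emph{not} exclude (only $\rppp$ and $L(3,1)$ are excluded, and $S^3$ is closed, orientable and irreducible). Your key assertion that ``exactly one component of $M'$ must be homeomorphic to $M$ and every other component must be $S^3$'' fails there: when $M\cong S^3$, crushing can produce a complex all of whose components are $3$--spheres, or even the empty complex (this is precisely the mechanism behind the $3$--sphere recognition algorithm of \cite{jac-rub-0eff}). Your procedure would then discard every component and have nothing left to iterate on, so it never outputs a triangulation of $M$. The fix must be stated explicitly: if a crush leaves nothing, or if at termination every surviving $0$--efficient component is recognized as $S^3$, conclude $M\cong S^3$ and output any explicit $0$--efficient triangulation of $S^3$ (these exist and, by Proposition 5.1 of \cite{jac-rub-0eff} as quoted in Section 4 of this paper, are exactly the two-vertex ones). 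Relatedly, your mid-stream appeal to Rubinstein--Thompson $3$--sphere recognition to sort out which component is $M$ is legitimate but heavier than needed: one can simply iterate the crushing on all components until each is $0$--efficient and then use the vertex-count criterion (two vertices if and only if the component is $S^3$) to identify and discard the spurious summands. With the $S^3$ case handled, your argument matches the cited proof.
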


Note: While $\rppp$ does not admit a 0-efficient triangulation, $L(3,1)$ does have a 0-efficient triangulation; however, there are triangulations of $L(3,1)$ that cannot be modified by our algorithm to a $0$-efficient triangulation; in fact, these triangulations show up as obstructions to our algorithm and exhibit that the manifold we started with is either $\rppp$ or $L(3,1)$.

The same method will modify any ideal triangulation of the interior of a compact, irreducible 3-manifold to a $(0)$-efficient ideal triangulation.

\begin{thm}  [Theorem 5.17, \cite{jac-rub-0eff}]  If $M\ne \mathbb{B}^3$ is a compact, orientable, irreducible,
$\bdy$--irreducible $3$--manifold, with non-empty boundary, then there is an algorithm that will modify any triangulation of $M$  to a
$0$--efficient triangulation.\end{thm}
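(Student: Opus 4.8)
The plan is to induct on the number of tetrahedra $\abs{\T}$: at each stage I detect a normal $2$--sphere or properly embedded normal disk that is \emph{not} vertex-linking and crush the triangulation along it, which strictly decreases $\abs{\T}$ while leaving the underlying manifold equal to $M$; when no such surface survives, the triangulation is $0$--efficient by definition. As a preliminary I would first arrange, using the reductions recorded above and in \cite{jac-rub-0eff}, that every vertex of $\T$ lies in $\bdy M$ with exactly one vertex per boundary component, so that all vertex-linking surfaces are disks. It then suffices to establish three things: (a) non-vertex-linking normal $2$--spheres and disks can be found algorithmically, (b) crushing along such a surface returns a triangulation of $M$, and (c) crushing strictly reduces $\abs{\T}$.

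For (a) I would use the projective solution space of normal surfaces in $\T$. A standard result of normal surface theory, underlying the detection of essential spheres and disks and proved in \cite{jac-rub-0eff}, guarantees that if $\T$ carries \emph{any} non-vertex-linking normal $2$--sphere or disk, then some \emph{vertex} solution is one; the ingredients are additivity of Euler characteristic and of quadrilateral coordinates under the Haken sum, the fact that a normal surface carrying no quadrilaterals is a union of vertex-linking surfaces, and orientability of $M$ (ruling out normal projective planes). Since the finitely many vertex solutions are computable and each is testable for being vertex-linking, this yields an algorithm that either certifies $0$--efficiency or returns a surface $S$ to crush.

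For (b) and (c) I would crush $\T$ along $S$ and appeal to the general crushing theorem (Theorem 4.1 of \cite{jac-rub-0eff}, of which \ref{combinatorial-crush} above is a special case). Crushing produces a triangulation $\T'$ with $\abs{\T'} < \abs{\T}$ of a manifold $M'$ obtained from $M$ by cutting along $S$, capping the resulting $2$--sphere boundaries with $3$--balls, and discarding any closed component lying in the short list $S^3, \rppp, L(3,1), S^2\times S^1$, the twisted $S^2$--bundle over $S^1$, together with any $\mathbb{B}^3$ component. Here the hypotheses are exactly what is needed: irreducibility forces a non-vertex-linking normal $2$--sphere to bound a ball, so cut-and-cap merely splits off a (discarded) $S^3$ and returns $M$; $\bdy$--irreducibility forces a non-vertex-linking normal disk to be boundary-parallel, cutting off a (discarded) ball and again returning $M$; and $M \ne \mathbb{B}^3$ guarantees that the surviving, non-discarded component really is $M$. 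Hence $M' = M$, and I would replace $\T$ by the smaller triangulation $\T'$ (for a disk $S$ one reduces to the $2$--sphere case via the sphere that $S$ forms with the disk $\bdy S$ bounds in $\bdy M$).

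The induction terminates because $\abs{\T}$ is a strictly decreasing nonnegative integer, and at termination there are no non-vertex-linking normal $2$--spheres or disks, i.e.\ $\T$ is $0$--efficient. The main obstacle is step (b): one must be sure that crushing neither changes nor destroys $M$, and this is precisely where the control afforded by the crushing theorem — its explicit bound on which closed summands can appear and its guarantee that the non-special component is $M$ — does the essential work, with the three hypotheses ruling out exactly the degenerate outcomes. A secondary item to verify is that the crushed triangulation still has all vertices in $\bdy M$, one per component, so that the output meets the structural conclusion of $0$--efficiency; this is built into the crushing construction of \cite{jac-rub-0eff}, since the point to which a ball-bounding sphere is crushed is absorbed when that ball is removed.
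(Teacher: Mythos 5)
Your proposal is correct and follows essentially the same route as the source: this paper only quotes the theorem from \cite{jac-rub-0eff}, and the proof there is exactly your loop --- detect a non-vertex-linking normal $2$--sphere or disk among the vertex solutions of the projective solution space, crush along it using the general crushing theorem (Theorem 4.1 of \cite{jac-rub-0eff}, of which Theorem \ref{combinatorial-crush} here is the combinatorial special case), use irreducibility, $\bdy$--irreducibility, and $M\ne\mathbb{B}^3$ to conclude the crushed manifold is again $M$, and terminate because $\abs{\T}$ strictly decreases. Two of your side remarks are imprecise but harmless under the hypotheses: the crushing theorem is stated in the reverse direction ($M$ is recovered from the crushed manifold by connected sums with the listed special manifolds, which coincides with your ``cut, cap, and discard'' description only because $M$ is irreducible), and orientability alone does not exclude embedded projective planes (one needs irreducibility and $\bdy M\ne\emptyset$ to rule out the $\rppp$ case).
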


\begin{figure}[htbp] \psfrag{B}{\tiny
{$B\subset\bdy M$}}\psfrag{N}{{$e \in \bdy M$}}\psfrag{M}{{$e \in
\open{M}$}}
 \psfrag{b}{\tiny{$B'\subset\bdy M$}}\psfrag{e}{\footnotesize{e}}
 \psfrag{V}{\tiny{$v$}}\psfrag{v}{\tiny
 {$v'$}}\psfrag{D}{\footnotesize{$D$}}\psfrag{d}{\footnotesize{$D'$}}\psfrag{A}{\footnotesize{cycle
 of quads}}

        \begin{center}
  \includegraphics[width=3.5 in]{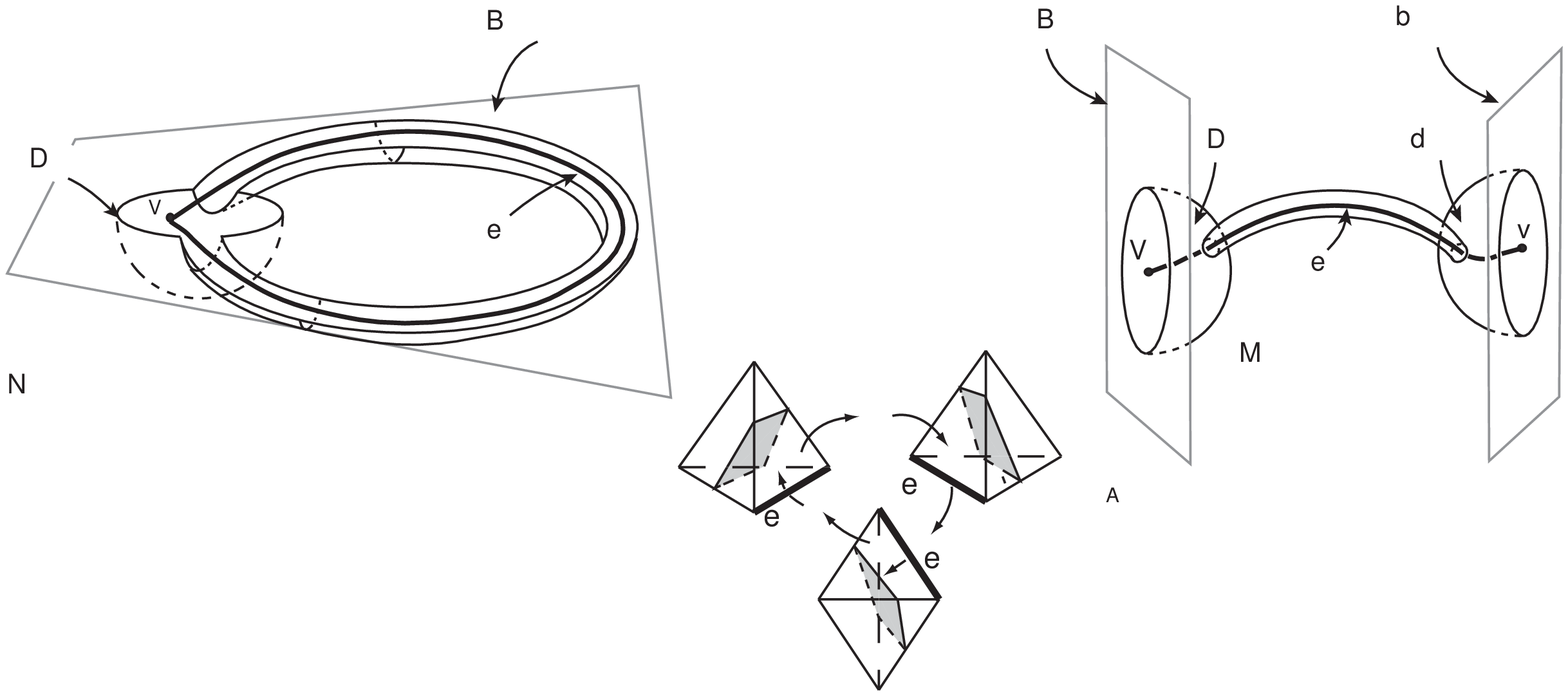}
\caption{Thin
edge-linking annuli. On the left is a thin edge-linking annulus
about the edge $e$ in the boundary. On the right, the edge $e$ is in
the interior of the manifold.} \label{fr-thin-annulus.eps}
\end{center}
\end{figure}

If $\T$ is a triangulation of the $3$--manifold $M$, we say a normal
annulus in $M$ is {\it thin edge-linking} if and only if it is
normally isotopic into an arbitrarily small regular neighborhood of an
edge in the triangulation. In Figure \ref{fr-thin-annulus.eps}, we
give the two possible examples of thin edge-linking, normal annuli; the figure on the left is
a thin edge-linking normal annulus for an edge $e$  in $\bdy M$ and the one
on the right is a thin edge-linking, normal annulus for an edge $e$ in
$M$ with vertices in distinct boundary components of $M$.  A necessary and sufficient condition for an edge $e$ in $\bdy M$ or
a properly embedded edge $e$ in $\T$ with boundary in distinct components of $\partial$M to have a thin edge-linking
annulus about it is that no face in the triangulation has two edges
identified to the edge $e$. A thin edge-linking, normal
annulus is not essential. F     or example, a thin edge-linking, normal annulus along an edge in the boundary is not $\bdy$-incompressible and if $M$ is incompressible and $\bdy$-incompressible is isotopic into $\bdy M$; one in the interior between distinct boundary components is not incompressible.  If $M$ is a compact $3$--manifold with nonempty boundary and $\T$ is
a triangulation of $M$, we say $\T$ is {\it annular-efficient} if
and only if $\T$ is $0$--efficient and the only normal, incompressible annuli are thin edge-linking annuli. David Bachman and
Saul Schleimer, who have independently studied similar conditions to our annular-efficient triangulations, use the term  $\sfrac{1}{2}$-${\it efficient}$ for  {\it annular-efficient}.

In the following, we have necessary topological conditions for a $3$-manifold with an annular-efficient triangulation.

\begin{prop} \label{annular-eff-properties}	Suppose  $M\ne \mathbb{B}^3$ is a compact $3$--manifold with boundary and has an
annular-efficient triangulation. Then $M$ is irreducible,
$\bdy$--irreducible, and an-annular. Furthermore, there are no normal
 $2$--spheres, all the vertices are in $\bdy M$, the only normal disks are vertex-linking, and there
is precisely one vertex in each component of $\bdy M$.\end{prop}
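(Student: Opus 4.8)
The plan is to derive each topological conclusion by exhibiting an undesirable normal surface whenever the conclusion fails, and then invoking annular-efficiency (which forces that surface to be vertex-linking or thin edge-linking) to obtain a contradiction or the desired structural statement. Since annular-efficiency includes $0$-efficiency by definition, I would first harvest the $0$-efficiency consequences directly: the results quoted from \cite{jac-rub-0eff} (item (ii) in the list following the definition of $0$-efficiency) already give that a $0$-efficient triangulation of a compact $M$ with $\bdy M \ne \emptyset$ and $M \ne \mathbb{B}^3$ is irreducible, $\bdy$-irreducible, has no normal $2$-spheres, has all vertices in $\bdy M$ with exactly one vertex per boundary component, and has only vertex-linking normal disks. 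Thus irreducibility, $\bdy$-irreducibility, the absence of normal $2$-spheres, the vertex distribution, and the normal-disk statement all come essentially for free from the $0$-efficiency half of the hypothesis, and the genuinely new content is an-annularity.

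For an-annularity I would argue by contrapositive. Suppose $M$ contains a properly embedded essential annulus $A$, i.e.\ one that is incompressible and $\bdy$-incompressible. Because $M$ is irreducible and $\bdy$-irreducible (just established), I can normalize $A$: the standard normalization procedure for incompressible, $\bdy$-incompressible surfaces in an irreducible, $\bdy$-irreducible triangulated manifold produces a normal surface in the isotopy class of $A$ with no greater complexity, and no disk or $2$-sphere components are created (these would contradict $0$-efficiency or $\bdy$-incompressibility). The resulting normal surface has Euler characteristic zero, so each of its components is a normal annulus, torus, Klein bottle, or (in the non-orientable case) a non-orientable surface of Euler characteristic zero; since $A$ is essential, at least one component is an incompressible normal annulus that is \emph{not} isotopic into $\bdy M$. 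But annular-efficiency says every incompressible normal annulus is thin edge-linking, and a thin edge-linking annulus is \emph{not} essential (as noted in the excerpt just before the definition: in the boundary case it fails to be $\bdy$-incompressible or is isotopic into $\bdy M$, and in the interior case it fails to be incompressible). This contradicts the existence of an essential normal annulus, and hence $M$ is an-annular.

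The main obstacle I anticipate is the normalization step for the annulus: one must verify that the incompressible, $\bdy$-incompressible annulus $A$ can be isotoped to a normal surface \emph{without} its essential annular character being lost and without producing only thin edge-linking pieces after throwing away inessential components. In particular, the delicate point is ruling out the possibility that the entire normalized surface consists of vertex-linking and thin edge-linking components, i.e.\ that $A$ normalizes to something isotopically trivial relative to $\bdy M$; this is where incompressibility and $\bdy$-incompressibility of $A$, together with $\bdy$-irreducibility of $M$, must be used carefully to guarantee a surviving component that is genuinely essential. I would handle this by tracking Euler characteristic and the fact that compressions and $\bdy$-compressions only simplify, so an essential annulus cannot be normalized away entirely; the surviving normal annulus component then contradicts annular-efficiency as above. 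The remaining conclusions (uniqueness of the vertex per boundary component and the exclusion of $2$-sphere boundary components, which is implicit) follow from the cited $0$-efficiency results and require no additional work beyond citation.
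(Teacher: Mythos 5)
Your proposal is correct and follows essentially the same route as the paper: the authors likewise cite the $0$-efficiency results of \cite{jac-rub-0eff} (Theorem 5.15) for irreducibility, $\bdy$-irreducibility, the absence of normal $2$-spheres, and the vertex statements, and then prove an-annularity by noting that an essential annulus would normalize to an essential normal annulus, which annular-efficiency forces to be thin edge-linking---impossible, since a thin edge-linking annulus is either $\bdy$-compressible (hence boundary-parallel in an irreducible, $\bdy$-irreducible manifold) or compressible. The normalization step you flag as delicate is simply asserted in the paper as the standard fact that an essential surface in such a manifold is isotopic to a normal surface in any triangulation, so your extra care there is harmless but not needed.
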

\begin{proof} Since an annular-efficient triangulation is
$0$--efficient, it follows from Theorem 5.15 of \cite{jac-rub-0eff}
that $M$ is irreducible and $\bdy$--irreducible and there are no
normal $2$--spheres, all the vertices are in $\bdy M$, and there is
precisely one vertex in each boundary component of $M$. Hence, it remains to prove that $M$ is an-annular. If
there is a properly embedded, essential annulus in $M$, then for any
triangulation, there must be a normal, embedded, essential annulus
in $M$. In particular, this would need to be the case for the given
annular-efficient triangulation. However, a normal, embedded,
essential annulus can not be thin edge-linking, as a thin
edge-linking annulus is either $\bdy$-compressible and (in an irreducible and $\bdy$-irreducible 3--manifold) parallel into the boundary of the manifold or is along an edge between distinct boundary components and is compressible. Therefore, a thin-edge-linking annulus cannot be essential. Thus the assumption of an
embedded, essential annulus leads to contradictions.\end{proof}

\begin{prop}\label{decide-annular-eff} Given a triangulation of a compact, orientable 3--manifold with nonempty boundary, no component of which is a 2--sphere, there is an algorithm to decide if the triangulation is annular--efficient. Furthermore, if the triangulation is not 0-efficient, the algorithm will construct a normal disk that is not vertex-linking; and if the triangulation is 0--efficient and is not annular-efficient,  the algorithm will construct an incompressible,  normal annulus that is not  thin edge-linking.\end{prop}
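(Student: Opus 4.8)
The plan is to reduce the decision to the enumeration of a finite, effectively computable family of normal surfaces — the fundamental normal surfaces (among them the vertex solutions) of the normal surface matching equations — and then to a structural claim guaranteeing that any witness to the failure of annular-efficiency can be taken from this family. Recall that the projective solution space is a compact rational polytope whose finitely many fundamental surfaces, together with their normal coordinates, are computable from $\T$. The algorithm runs in two stages, matching the two clauses of the statement. The first stage decides $0$-efficiency: by the $0$-efficiency decision procedure of \cite{jac-rub-0eff}, $\T$ is $0$-efficient unless some fundamental normal surface is a non-vertex-linking disk, and this is detected by inspecting the enumerated surfaces. On failure we output the disk and stop; otherwise we record that $\T$ is $0$-efficient, so that (by Proposition \ref{annular-eff-properties} and the results cited there) $M$ is irreducible and $\bdy$-irreducible, all vertices lie in $\bdy M$, with one vertex per boundary component.

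Assuming $\T$ is $0$-efficient, the second stage searches for an incompressible normal annulus that is not thin edge-linking. The key claim is that \emph{if $\T$ contains such an annulus, then it contains a fundamental one.} Granting this, the algorithm tests each fundamental normal surface $V$ for three effectively checkable properties: that $V$ is a connected annulus (decided from its Euler characteristic, its orientability, and its number of boundary circles, all readable from the normal coordinates); that $V$ is incompressible (decided by the usual search for a normal compressing disk, here tightly constrained because $0$-efficiency forbids non-vertex-linking normal disks); and that $V$ is not thin edge-linking. The last test is made against the finite list of edges of $\T$: an edge $e$ admits a thin edge-linking annulus only when no face has two of its edges identified to $e$, and in that case the annulus is the unique low-weight normal annulus hugging $e$, whose normal coordinates are prescribed; since normal isotopy preserves normal coordinates, being thin edge-linking is decided by comparing the coordinate vector of $V$ against these finitely many candidates. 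If some $V$ passes all three tests we output it; otherwise $\T$ is declared annular-efficient.

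To prove the claim I would argue by least weight. Let $A$ be an incompressible, non-thin-edge-linking normal annulus of least weight, and suppose $A$ is not fundamental, so that $A = A_1 + A_2$ is a Haken sum into nonempty normal surfaces of strictly smaller weight. Euler characteristic is additive under Haken summation, so $\chi(A_1) + \chi(A_2) = \chi(A) = 0$; since $\T$ is $0$-efficient there are no normal $2$-spheres and no non-vertex-linking normal disks, which removes the positive-Euler-characteristic summands and forces the decomposition into annulus-like (and negative-Euler-characteristic) pieces. A Jaco--Oertel style patch-and-exchange analysis of the reduced sum — tracking the exchange bands along the intersection arcs and circles and invoking irreducibility and $\bdy$-irreducibility of $M$ — then shows that one summand already contains an incompressible normal annulus that is not thin edge-linking. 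This summand has strictly smaller weight, contradicting minimality, so $A$ is fundamental.

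\textbf{The main obstacle} is exactly this descent step: controlling the Haken summands so that the essential-annulus property — incompressibility together with the failure to be thin edge-linking — survives, with smaller weight, in one summand. The delicate points are the boundary behaviour, since an incompressible annulus may be $\bdy$-compressible (the permitted thin edge-linking annuli along boundary edges are boundary-parallel and $\bdy$-compressible), and the need to ensure that the exchange operation neither compresses every annular summand nor collapses each of them into a thin edge-linking annulus. Ruling these possibilities out is where irreducibility, $\bdy$-irreducibility, and $0$-efficiency must be used in combination, and this is where I expect the substantive work of the proof to lie.
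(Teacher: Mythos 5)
Your algorithmic frame — two stages, enumeration of a finite computable family of normal surfaces, and a structural claim that a witness to non-annular-efficiency can be found in that family — matches the paper's, but your proof has a genuine gap exactly where you flagged it: the descent claim is asserted, not proven, and the ``Jaco--Oertel style patch-and-exchange analysis'' you invoke would not deliver it in the form you state. The problem is that when you write $A = A_1 + A_2$, nothing forces either summand to contain an incompressible, non-thin-edge-linking annulus: after the Euler characteristic count, every bounded component of each summand could be a thin edge-linking annulus (together with closed $\chi=0$ pieces), in which case neither summand is a witness and your weight induction stalls. Handling precisely this case is the substantive content of the paper's proof, so what you have written is a correct reduction plus an accurate description of the hole in it.

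The paper closes that hole as follows. It chooses the witness $A$ so that its carrier $\C(A)$ has minimal \emph{dimension} (not minimal weight), so that in $kA = nX + mY$ the summands $X,Y$ lie in proper faces of $\C(A)$. Since $\bdy A$ is essential, every bounded component of $X$ and $Y$ has essential boundary, hence (having $\chi=0$) is an annulus or M\"obius band; a M\"obius band would yield a non-thin-edge-linking normal annulus carried by a proper face, contradicting minimality, so all bounded components are annuli, and by minimality again these annuli are thin edge-linking. Two specific facts then finish the argument: a thin edge-linking annulus can be normally isotoped off any closed normal surface, which forces both $X$ and $Y$ to have thin edge-linking annulus components; and the Haken sum of two thin edge-linking annuli is either two thin edge-linking annuli or has a vertex-linking disk component — each incompatible with $A$ being connected and not thin edge-linking. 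This is the argument your proposal is missing; without it (or a worked-out substitute at the level of weight descent), your claim that a witness exists among the fundamental surfaces is unsupported. Two minor further points: the paper's conclusion is that a witness occurs at a \emph{vertex} of the projective solution space, a slightly smaller family than the fundamental surfaces; and your incompressibility test (``search for a normal compressing disk'') is not right as stated, since a compressing disk for a normal surface need not be normal — though for an annulus in an irreducible, $\bdy$-irreducible manifold, incompressibility reduces to checking that its boundary curves are essential in $\bdy M$.
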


\begin{proof} By Proposition 5.19 of \cite{jac-rub-0eff}, it can be decided if the given triangulation is 0--efficient; and, if there is a normal disk that is not vertex-linking, the algorithm will construct one.  So, we may assume the only normal disks are vertex-linking; i.e., the triangulation is 0--efficient.

If there is an incompressible, normal annulus that is not thin edge-linking, then consider one,  say $A$, where the carrier of $A, \C(A)$, has minimal dimension.  If $\C(A)$ is not a vertex, then there are normal surfaces $X$ and $Y$ in proper faces of $\C(A)$ and positive integers $k, n$ and $m$ so that $kA = nX + mY$.  Since the triangulation is 0--efficient, the only positive Euler characteristic normal surfaces are vertex-linking normal disks; hence, neither $X$ nor $Y$ has a component with positive Euler characteristic.  It follows that the components of both $X$ and $Y$ have Euler characteristic zero.  Since $A$ has essential boundary, every component of $X$ and $Y$ with boundary has essential boundary. It follows that each component of $X$ and $Y$ with boundary is either an annulus or  a M\"obius band.  However, if a component of $X$ or $Y$ is a M\"obius band, then we would have a normal annulus that is not thin edge-linking and  carried by a proper face of $\C(A)$, which contradicts our choice of $A$.  So, any component of $X$ or $Y$ that has boundary is an annulus and, again by our choice of $A$, these annuli must be thin edge-linking.  Now, a thin edge-linking annulus can be normally isotoped to miss any closed normal surface. It follows that both $X$ and $Y$ must have components with boundary and as such both must have components that are thin edge-linking annuli. However, the Haken sum of two thin edge-linking annuli is either two thin edge-linking annuli (the two annuli are the same or have their boundaries in distinct boundaries of the 3--manifold), or has a component a vertex-linking disk.  Both possibilities lead to a contradiction that $A$ is connected and not a thin edge-linking annulus.  

It follows that if the triangulation is 0--efficient and there is a normal annulus that is not thin edge-linking, then there is one at a vertex of the projective solution space for the triangulation. Furthermore, we can recognize if a normal surface is a thin edge-linking annulus. \end{proof}

Our method to understand boundary slopes of normal  surfaces (hence, boundary slopes of properly embedded incompressible and $\bdy$-incompressible surfaces) is to show that for manifolds having the topological conditions necessary for a manifold with an annular-efficient triangulation are topological conditions sufficient to modify any given triangulation of that manifold to an annular-efficient triangulation. In fact, we introduce a very useful new notion for efficient triangulations of bounded 3-manifolds, as well as for ideal triangulations of their interiors; it is boundary-efficient triangulations. 

A triangulation $\T$ of the bounded 3-manifold $M \ne \mathbb{B}^3$ is said to be {\it boundary-efficient} iff the only normal surface isotopic into a component of $\bdy M$ is a boundary-linking normal surface. If we have an ideal triangulation $\T^*$ of the interior of the compact 3--manifold with boundary, we say $\T^*$ is {\it boundary-efficient} iff the only normal surface in $\T^*$ isotopic to a vertex-linking surface is the vertex-linking surface itself.

\begin{prop}\label{bdy-eff-gives-annular-eff}  Suppose $M$ is a compact, orientable 3--manifold with boundary and $M$ is irreducible,  $\bdy$--irreducible, and an-annular. If  $\T$ is a $\bdy$-efficient triangulation of $M$, then $\T$ is annular-efficient. \end{prop}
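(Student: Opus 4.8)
The plan is to verify the two clauses in the definition of annular-efficiency in turn---that $\T$ is $0$-efficient and that every incompressible normal annulus is thin edge-linking---and in each case to reduce to boundary-efficiency by the same device. Using only that $M$ is irreducible, $\bdy$-irreducible, and an-annular, I would show that the offending normal surface is isotopic into $\bdy M$, and then invoke boundary-efficiency to conclude that it must be a boundary-linking surface, which is impossible for a disk or an annulus.

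To see that $\T$ is $0$-efficient, let $D$ be a normal disk that is not vertex-linking. Since $M$ is $\bdy$-irreducible, $\bdy D$ bounds a disk $D'\subset\bdy M$; since $M$ is irreducible, the sphere $D\cup D'$ bounds a ball, so $D$ is parallel to $D'$ and hence isotopic into $\bdy M$. But $D$ is a disk while a boundary-linking surface is closed, so $D$ is a non-vertex-linking normal surface isotopic into $\bdy M$ that is not boundary-linking, contradicting boundary-efficiency. Thus every normal disk is vertex-linking.

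For the annuli, let $A$ be an incompressible normal annulus. First I would rule out the possibility that the two boundary circles of $A$ lie in distinct components of $\bdy M$: the only essential arcs in an annulus are spanning arcs, so any boundary-compression would force its arc in $\bdy M$ to join two distinct components, which is impossible; hence such an $A$ would be $\bdy$-incompressible and therefore essential, contradicting that $M$ is an-annular. So both boundary circles of $A$ lie in one component of $\bdy M$. Since $M$ is an-annular, $A$ is not essential, and being incompressible it must be $\bdy$-compressible; a standard argument (boundary-compress $A$ to a disk, which is $\bdy$-parallel by $\bdy$-irreducibility, then reverse the compression) shows that in an irreducible, $\bdy$-irreducible manifold an incompressible, $\bdy$-compressible annulus is $\bdy$-parallel, so $A$ is isotopic into $\bdy M$. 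As $A$ is not boundary-linking, boundary-efficiency is again violated. Hence $\T$ has no incompressible normal annuli at all, so the only incompressible normal annuli are (vacuously) thin edge-linking, and $\T$ is annular-efficient.

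The main obstacle is the annulus step, specifically its two reductions: eliminating boundary circles on distinct components, and passing from an incompressible, $\bdy$-compressible annulus to a $\bdy$-parallel one via the classification of such annuli in irreducible, $\bdy$-irreducible manifolds. The $0$-efficiency step and the two appeals to boundary-efficiency are then routine. One point to confirm when invoking the definition is that boundary-efficiency constrains only the non-vertex-linking normal surfaces isotopic into $\bdy M$, so that the ever-present vertex-linking triangles---which are themselves isotopic into $\bdy M$---are correctly excluded from the hypothesis.
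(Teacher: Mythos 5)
There is a genuine gap, and it sits exactly at the point you flagged in your closing remark. Boundary-efficiency, as the paper defines and uses it (see the paragraph preceding the definition, Theorem \ref{determine-bdry-eff}, and the proof of this proposition), constrains only \emph{closed} normal surfaces: the only closed normal surface isotopic into $\bdy M$ is a boundary-linking one. It says nothing about properly embedded normal surfaces with nonempty boundary. Both of your contradiction steps apply the definition to a surface with boundary---first the disk $D$, then the annulus $A$---so neither step actually engages the hypothesis. Your proposed reading of the definition (exclude only the vertex-linking disks) is not tenable: under it, your own annulus argument would show that a $\bdy$-efficient triangulation contains \emph{no} incompressible normal annuli whatsoever; but a thin edge-linking annulus about an essential boundary edge is an incompressible, $\bdy$-parallel normal annulus, and such annuli occur in the $\bdy$-efficient triangulations the paper constructs. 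This is precisely why annular-efficiency is defined so as to permit thin edge-linking annuli rather than to forbid all incompressible ones.

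What is missing is the device that converts the offending bounded surface into an offending closed surface: a barrier argument. Your topological reductions are correct and are in fact needed---a non-vertex-linking normal disk is parallel into $\bdy M$ by irreducibility and $\bdy$-irreducibility, and an incompressible normal annulus that is neither thin edge-linking nor essential is isotopic into $\bdy M$ (this is where an-annularity enters)---but the paper then takes $\bdy M\cup D$ (respectively $\bdy M\cup A$) as a barrier: the frontier of a regular neighborhood of this complex is a closed surface isotopic into $\bdy M$, and it normalizes in the complement of the barrier to a \emph{closed} normal surface that is still isotopic into $\bdy M$ yet cannot be normally isotopic into $\bdy M$, because the non-vertex-linking disk (or non-thin-edge-linking annulus) lies trapped between it and $\bdy M$. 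That closed normal surface is the object that contradicts $\bdy$-efficiency. Without this step your argument never produces anything to which the hypothesis applies.
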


\begin {proof} If there is a normal disk $D$ in $\T$ that is not vertex-linking or a
normal annulus $A$ with incompressible boundary in $\T$ that is not thin
edge-linking, then by a barrier surface argument using $\bdy M\cup D$ or $\bdy M\cup A$, respectively, as barriers, there is a closed
normal surface in $\T$ that is isotopic into $\bdy M$ but is not
\underline{normally} isotopic into $\bdy M$.  Hence, $\T$ must be both 0-efficient and have every incompressible annulus thin edge-llinking.\end{proof}

It follows that for a $\bdy$-efficient triangulation $\T$  of an irreducible, $\bdy$-irreducible  3-manifold, the triangulation $\T$ is necessarily 0-efficient and if, in addition, the manifold has no essential annuli  (an-annular), the triangulation is annular-efficient.  In general, a $\bdy$-efficient triangulation does not imply annular-efficient nor does an annular-efficient triangulation imply $\bdy$-efficient.

The following theorem gives conditions under which we can decide if a triangulation of a compact 3-manifold with boundary has a closed, normal surface that is isotopic into the boundary but is not normally isotopic into the boundary.  Similarly, for an ideal triangulation of the interior of a compact 3--manifold with boundary, it can be determined if there is a normal surface other than the vertex-linking surface that is isotopic to the vertex-linking surface,  Corollary \ref{determine-vertex-linking}.

\begin{thm} \label{determine-bdry-eff} Suppose $M$ is a compact, orientable 3--manifold with boundary and $M$ is irreducible,  $\bdy$--irreducible, and an-annular.  Then for any triangulation $\T$ of $M$ there is an algorithm to decide if there is a closed normal surface that is isotopic into $\bdy M$ but is not normally isotopic into $\bdy M$. Furthermore, if there is one the algorithm will construct one. \end{thm}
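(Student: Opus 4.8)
The plan is to reduce the decision entirely to a test for annular-efficiency and then to reuse the barrier-surface construction from the proof of Proposition~\ref{bdy-eff-gives-annular-eff}. First I would apply the algorithm of Proposition~\ref{decide-annular-eff} to $\T$. That algorithm either certifies that $\T$ is annular-efficient, or it returns an explicit witness to the failure: a normal disk $D$ that is not vertex-linking (when $\T$ is not $0$-efficient) or an incompressible normal annulus $A$ that is not thin edge-linking (when $\T$ is $0$-efficient but not annular-efficient).

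If such a $D$ or $A$ is produced, I would run the barrier argument exactly as in Proposition~\ref{bdy-eff-gives-annular-eff}, taking $\bdy M\cup D$ (respectively $\bdy M\cup A$) as a barrier and normalizing the frontier of a small regular neighborhood of the barrier. Since normalization relative to a barrier is an effective procedure, this both produces and constructs a closed normal surface that is isotopic into $\bdy M$ yet, being blocked by $D$ or $A$, is not normally isotopic into $\bdy M$. In this case the algorithm answers affirmatively and returns the constructed surface.

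The content of the theorem is the justification of the negative answer: I must show that if $\T$ is annular-efficient then every closed normal surface isotopic into $\bdy M$ is in fact normally isotopic into $\bdy M$ (equivalently, under the standing hypotheses annular-efficiency implies $\bdy$-efficiency, the converse of the an-annular case recorded after Proposition~\ref{bdy-eff-gives-annular-eff}). I would argue by contradiction. Assume $\T$ is annular-efficient and let $S$ be a closed normal surface isotopic into $\bdy M$, parallel to a boundary component $B$ and of least weight among all normal surfaces parallel to $B$. Then $S$ cobounds a product region $R\cong B\times I$ with $B$, and I would examine the cell decomposition induced on $R$ by $\T$. If that decomposition is a trivial combinatorial product, then $S$ contains all the quad types of a parallel copy of $B$ and, by the lemma that a normal surface containing all quad types of a boundary-linking surface contains that surface as a component, $S$ is normally the boundary-linking surface; this simultaneously establishes that $\T$ has normal boundary and that no boundary-parallel normal surface can fail to be boundary-linking. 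Otherwise $R$ contains a truncated tetrahedron or a nontrivial chain of truncated prisms, and from this extra structure I would extract either a normal disk that is not vertex-linking, contradicting $0$-efficiency via Proposition~\ref{annular-eff-properties}, or an incompressible normal annulus that is not thin edge-linking, contradicting annular-efficiency.

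The main obstacle is precisely this last extraction. For the disk and annulus witnesses the barrier direction is routine, but excluding higher-genus boundary-parallel surfaces is delicate: a surface of genus $\ge 2$ parallel to $B$ but not normally parallel does not immediately exhibit a compressing disk or a spanning annulus, so I expect to need the least-weight hypothesis together with an innermost-disk and exchange analysis inside $R$ to force a lower-complexity normal disk or annulus, and then a separate verification that the annulus so produced is genuinely incompressible and not thin edge-linking. This combinatorial analysis of the product region is the technical heart of the argument; by contrast, the two algorithmic ingredients, deciding annular-efficiency and executing the barrier normalization, are already supplied by Proposition~\ref{decide-annular-eff} and the proof of Proposition~\ref{bdy-eff-gives-annular-eff}.
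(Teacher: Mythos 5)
Your algorithm's ``no'' answer rests entirely on the converse of Proposition \ref{bdy-eff-gives-annular-eff}: that under the standing hypotheses (irreducible, $\bdy$--irreducible, an-annular) an annular-efficient triangulation is automatically $\bdy$-efficient, so that once Proposition \ref{decide-annular-eff} certifies annular-efficiency no boundary-parallel-but-not-normally-parallel surface can exist. This is precisely the step you defer as ``the technical heart,'' and it is a genuine gap, not a routine extraction: a closed normal surface of genus $\ge 2$ that is isotopic to a boundary component but not normally isotopic into $\bdy M$ does not exhibit a non-vertex-linking normal disk or a non-thin-edge-linking incompressible normal annulus, and the paper explicitly cautions that in general an annular-efficient triangulation need \emph{not} be $\bdy$-efficient (the implication proved in Proposition \ref{bdy-eff-gives-annular-eff} goes only the other way). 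So as written, the decision procedure is unjustified in exactly the case that matters; your positive direction (witness disk or annulus $\Rightarrow$ barrier $\Rightarrow$ bad surface) is fine but is not where the difficulty lies.

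The paper takes a different route that avoids this implication altogether. It defines a partial order (``larger-than'') on closed normal surfaces isotopic into $\bdy M$, invokes Kneser's Finiteness Theorem to obtain a maximal surface $S$ that is isotopic into $\bdy M$ but not normally so, and then proves any such maximal $S$ must be a \emph{fundamental} surface: if $S = X + Y$ were a nontrivial Haken sum, an exchange annulus $A$ is isotopic into $S$ by an-annularity; if $A$ lies outside the product region between $S$ and $\bdy M$, then normalizing the frontier of $N(S\cup A)$ (using $S\cup A$ as a barrier) produces a strictly larger bad surface, contradicting maximality, while if $A$ lies inside the product region, a minimal-number-of-trace-curves argument on the annulus $A'\subset S$ cobounding a solid torus with $A$ gives a contradiction. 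The algorithm then merely enumerates the finitely many fundamental surfaces, tests each for being isotopic into $\bdy M$ (decidable by \cite{jac-tol}) and for failing to be normally isotopic into $\bdy M$. To repair your proposal you would need either to actually prove the annular-efficient $\Rightarrow$ $\bdy$-efficient implication under these hypotheses --- which the paper does not claim and indicates is false in general --- or to adopt a maximality-plus-fundamental-surface argument of this kind.
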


\begin{proof}  If $S$ and $S'$ are disjoint normal surfaces embedded in $M$ and both are isotopic into $\bdy M$, we say $S'$ is \emph{larger-than} $S$ if $S$ is normally isotopic into the product region between $S'$ and $\bdy M$. Being larger-than is a partial order on closed normal surfaces embedded in $M$.  

Suppose there is a normal surface in $M$ that is isotopic into $\bdy M$ but is \underline{not} normally isotopic   into $\bdy M$.  By Kneser's Finiteness Theorem \cite{kne} there are maximal (relative to the preceding partial order) such surfaces.  Suppose $S$ is a maximal normal surface that is isotopic into $\bdy M$ but not normally isotopic into 
$\bdy M$. We claim $S$ is a fundamental surface.

Suppose $S$ is not fundamental. Then $S = X + Y$ is a nontrivial Haken sum.  Hence, there are exchange annuli between $X$ and $Y$.   Suppose $A$ is an exchange annulus. Then $A$ is a 0-weight annulus meeting $S$ only in its boundary.  There are two possibilities: either $A$ is not in the product region between $S$ and $\bdy M$ or $A$ is in the product region between $S$ and $\bdy M$.  Since $S$ is isotopic into $\bdy M$ and $M$ is an-annular, then for either possibility, $A$ is isotopic into $S$.   

Let $N=N(S\cup A)$ be a small regular neighborhood of $S\cup A$, then  $N$ has three boundary components; one is a torus bounding a solid torus, which is a product between $A$ and an annulus $A'$ in $S$, another is a surface normally isotopic to $S$, and the third is a surface isotopic to $S$ but possibly not normal and even if normal is not  normally isotopic to $S$.  The complex $S\cup A$ is a barrier (see \cite{jac-rub-0eff}) and thus each boundary component of $N$ can be normalized in the closure of the component of its complement not meeting $S\cup A$. 

Suppose $A$ is not in the closure of the product region between $S$ and $\bdy M$.  Then
the component of $\partial N$ isotopic to $S$ can be normalized missing $S\cup A$ to a normal surface $S'$.  Since $M$ is irreducible and $\bdy$--irreducible, $S'$ is isotopic to $S$ and therefore, isotopic into $\bdy M$.  Moreover $S'$ is not normally isotopic to $S$ or normally isotopic into $\bdy M$ due to the annulus $A$.  But $S'$ is larger than $S$, which contradicts $S$ being maximal.

Suppose $A$ is in the closure of the product region between $S$ and  $\bdy M$.  Then $A$ co-bounds a solid torus which is a product between $A$ and an annulus $A'$ in $S$.  We observe that $X \neq Y$, for if this were not the case, then $X$ (and $Y$) would be one-sided and $M$ would be a twisted I-bundle, contradicting $M$ being an-annular.  Hence, there must be a trace curve in $A'$. Suppose we have selected $A'$ in this situation so that it has a minimal number of trace curves.  Since there is a trace curve in $A'$, there is another exchange annulus $A_1$ for $S$ meeting $A'$ in at least one of its boundary components.  If $A_1$ is 
not in the closure of the product region between $S$ and $\bdy M$, then the preceding argument gives a contradiction to our selection of $S$.  So we may assume $A_1$  is, like $A$,  in the closure of the product region between $S$ and  $\bdy M$ and therefore in the solid torus co-bounded by  $A$ and $A'$.  It follows that $A_1$ 
co-bounds a solid torus which is a product between $A_1$ and an annulus $A_1'$ in $A'\subset S$.  However, then $A_1'$  has fewer trace curves than $A'$ contradicting our choice of the exchange annulus $A$. 

So, there is a closed normal surface that is isotopic into $\bdy M$ and not normally isotopic into $\bdy M$ if and only if there is such a surface among the fundamental surfaces for the triangulation $\T$ of $M$.  By \cite{jac-tol} given any normal surface we can determine if it is isotopic into $\bdy M$ and it is straight forward to recognize if it is normally isotopic into $\bdy M$. It follows if such a surface exists, we can construct one.  \end{proof}

The argument carries over to an analogous result in the case of an ideal triangulation.

\begin{thm}\label{determine-vertex-linking} Suppose $M \ne \mathbb{B}^3$ is a compact, orientable,  irreducible,  $\bdy$--irreducible, and an-annular 3-manifold with nonempty boundary.  Then for any ideal triangulation $\T^*$ of $\open{M}$ and any ideal vertex $v^*$ of $\T^*$, there is an algorithm to decide if there is a closed normal surface that is isotopic into the vertex-linking surface of $v^*$ but is not normally isotopic into the vertex-linking surface of $v^*$. Furthermore, if there is one, the algorithm will construct one. \end{thm}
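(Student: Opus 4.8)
The plan is to reduce this statement to Theorem~\ref{determine-bdry-eff} by passing to an inflation and transporting the answer across the bijection of Theorem~\ref{bijection-ideal-inflate}. Since $M\ne\mathbb{B}^3$ is compact, orientable, irreducible, $\bdy$--irreducible, and an-annular, no component of $\bdy M$ is a $2$--sphere, so the inflation machinery of Section~\ref{crushing-inflations} applies. First I would fix an inflation $\T=\T_\Lambda$ of $\T^*$; by the construction recalled above (see \cite{jac-rub-inflate}), the chosen ideal vertex $v^*$ inflates to a single boundary component $B=B_{v^*}$ of $M$, and the vertex-linking surface $S_{v^*}$ inflates to the boundary-linking normal surface $S_v$ of $B$.

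Next I would make precise the per-component form of the correspondence. By Theorem~\ref{bijection-ideal-inflate} the combinatorial crushing of $\T$ along $\bdy M$ carries each closed normal surface $S$ in $\T$ to a homeomorphic closed normal surface $S^*$ in $\T^*$, bijectively, and (as in the proof of Corollary~\ref{vertex-link is bdry-link}) carries complementary regions to homeomorphic complementary regions and the boundary-linking surfaces to the vertex-linking surfaces. I may take $S$ disjoint from the boundary-linking surfaces, so $S$ lies in the component $X$ not meeting $\bdy M$, where the crushing takes place, and $S_v$ is crushed to $v^*$. Then $S$ is isotopic into $B$ exactly when $S$ cobounds a product region with $S_v$ inside $X$, and under crushing this product region is carried to a product region between $S^*$ and $S_{v^*}$; hence $S$ is isotopic into $B$ if and only if $S^*$ is isotopic into the vertex-linking surface of $v^*$. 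Since the crushing map sends normal coordinates to normal coordinates and $S_v$ to $S_{v^*}$, likewise $S$ is normally isotopic into $B$ if and only if $S^*$ is normally isotopic into the vertex-linking surface of $v^*$. Thus closed normal surfaces in $\T^*$ that are isotopic into the vertex-linking surface of $v^*$ but \emph{not} normally isotopic into it correspond bijectively, via crushing, to closed normal surfaces in $\T$ that are isotopic into $B$ but not normally isotopic into $B$.

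With this dictionary in hand, I would run the algorithm of Theorem~\ref{determine-bdry-eff} on the inflation $\T$, whose hypotheses are exactly those assumed on $M$. That algorithm enumerates the fundamental surfaces of $\T$ and, using \cite{jac-tol}, decides for each whether it is isotopic into $\bdy M$ and whether it is normally isotopic into $\bdy M$; I would refine this, again using \cite{jac-tol}, to record \emph{which} component of $\bdy M$ a given surface is isotopic into, thereby deciding whether there is a closed normal surface isotopic into $B$ but not normally isotopic into $B$, and producing one if it exists. Transporting a surface so produced back through the crushing map yields, by the preceding correspondence, the desired closed normal surface $S^*$ in $\T^*$ isotopic into the vertex-linking surface of $v^*$ but not normally isotopic into it; and if the algorithm finds none, the bijection guarantees there is none in $\T^*$ either.

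The step I expect to require the most care is the per-component refinement of the correspondence: Corollary~\ref{vertex-link is bdry-link} is phrased globally, for ``$\bdy M$'' versus ``a vertex-linking surface,'' and the work is to check that the bijection respects the pairing of the individual ideal vertex $v^*$ with its boundary component $B_{v^*}$. This is where the explicit behavior of the inflation construction is used---namely that $v^*$ inflates to the single component $B_{v^*}$ with $S_{v^*}$ inflating to $S_v$---together with the fact, from the proof of Theorem~\ref{bijection-ideal-inflate}, that the product region witnessing ``isotopic into $B_{v^*}$'' is carried homeomorphically by the crushing map to the product region witnessing ``isotopic into $S_{v^*}$.''
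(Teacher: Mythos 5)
Your proof is correct, but it follows a genuinely different route from the paper's. The paper proves this theorem in one sentence: immediately after Theorem \ref{determine-bdry-eff} it remarks that ``the argument carries over to an analogous result in the case of an ideal triangulation''; that is, the maximal-surface, exchange-annulus, Kneser-finiteness argument is to be rerun directly inside $\T^*$, showing that a maximal closed normal surface isotopic but not normally isotopic to the vertex-linking surface of $v^*$ must be fundamental in $\T^*$, after which one inspects the fundamental surfaces of $\T^*$ using \cite{jac-tol}. You instead reduce the ideal case to the already-established compact case by choosing an inflation $\T_\Lambda$ and transporting the question across the crushing bijection of Theorem \ref{bijection-ideal-inflate} and Corollary \ref{vertex-link is bdry-link}. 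This is non-circular (the inflation machinery and the bijection are established in Section \ref{crushing-inflations}, prior to and independently of the present section), and it buys something real: every piece of normal-surface algorithmics is executed in an honest triangulation of the compact manifold $M$, so you never have to certify that Haken sums, fundamental-surface enumeration, barriers, and Kneser finiteness ``carry over'' to ideal triangulations of pseudo-manifolds, which is precisely what the paper's one-line proof quietly assumes. The cost is the two per-component refinements. The first, which you correctly single out, is upgrading Corollary \ref{vertex-link is bdry-link} to respect the pairing $v^*\leftrightarrow B_{v^*}$; your justification (the inflation sends $S_{v^*}$ to the boundary-linking surface of $B_{v^*}$, the cell-like crushing map matches complementary regions, and a closed surface cannot be parallel to two distinct boundary components since $M\cong F\times I$ is excluded by an-annularity and irreducibility) is sound. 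The second you treat too lightly: deciding existence for the single component $B$ by inspecting fundamental surfaces requires knowing that if some closed normal surface is isotopic into $B$ but not normally isotopic into $B$, then a fundamental witness \emph{for that same component} exists; the statement of Theorem \ref{determine-bdry-eff} alone (existence of a witness somewhere along $\bdy M$) does not formally yield this. It does hold, because in the proof of Theorem \ref{determine-bdry-eff} the larger surface $S'$ produced from a non-fundamental maximal $S$ is isotopic to $S$ itself, hence parallel to the same boundary component, so the maximal element of the component-$B$ family is fundamental; but this needs to be said. With those two points made explicit, your reduction is a complete and arguably cleaner proof.
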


Recall, if 
we are given a compact, orientable $3$--manifold $M$ with boundary via a triangulation $\T$, algorithms
exist to determine if it is irreducible \cite{sch, jac-tol, rubin-poly, tho},
$\bdy$--irreducible \cite{haken-norflach, jac-tol},
or an-annular \cite{haken-homo, jac-sed-dehn}; from this and the previous two theorems, we have

\begin{cor} Suppose  $M$  a compact, orientable 3-manifold with boundary.  it can be decided if a triangulation of $M$ or an ideal triangulation of $\open{M}$ is $\bdy$-efficient. \end{cor}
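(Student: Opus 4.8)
The plan is to assemble the statement from pieces already established: the recognition algorithms cited just above (deciding whether $M$ is irreducible, $\bdy$-irreducible, and an-annular), the decision Theorems~\ref{determine-bdry-eff} and~\ref{determine-vertex-linking}, and the observation that $\bdy$-efficiency forces these topological conditions on $M$. This last observation is what lets the algorithm return an answer in the generality of an arbitrary compact, orientable $M$, so I would record it first. If $\T$ is $\bdy$-efficient, then it admits no non-vertex-linking normal disk, for such a disk $D$ would, via the barrier $\bdy M \cup D$ as in the proof of Proposition~\ref{bdy-eff-gives-annular-eff}, produce a closed normal surface isotopic into $\bdy M$ but not normally isotopic into $\bdy M$; hence $\T$ is $0$-efficient, and by the $0$-efficiency implications recalled above (Proposition~5.15 of~\cite{jac-rub-0eff}, with $\bdy M \ne \emptyset$ and $M \ne \mathbb{B}^3$) the manifold $M$ is irreducible and $\bdy$-irreducible. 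With these in hand, if $M$ contained an essential annulus it would normalize to an incompressible normal annulus that is not thin edge-linking (thin edge-linking annuli are never essential, as noted before Proposition~\ref{annular-eff-properties}), and the annular barrier argument would again contradict $\bdy$-efficiency; so $M$ is an-annular.

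Next I would describe the decision procedure for a triangulation $\T$ of $M$. Run the cited algorithms to test whether $M$ is irreducible, $\bdy$-irreducible, and an-annular. If any test fails then, by the necessity just recorded, $\T$ is not $\bdy$-efficient and the algorithm reports this; note that $M = \mathbb{B}^3$ is caught here, since $\mathbb{B}^3$ is not $\bdy$-irreducible. If all three succeed, then $M$ satisfies the hypotheses of Theorem~\ref{determine-bdry-eff}, which decides whether there is a closed normal surface isotopic into $\bdy M$ but not normally isotopic into $\bdy M$; by definition $\T$ is $\bdy$-efficient exactly when no such surface exists, so the algorithm returns the correct answer. For an ideal triangulation $\T^*$ of $\open{M}$ the procedure is parallel: I would first pass to an inflation $\T_\Lambda$ of $\T^*$ (a genuine triangulation of $M$, built as in~\cite{jac-rub-inflate}) on which the recognition algorithms can be run, and then either apply Theorem~\ref{determine-vertex-linking} at each ideal vertex $v^*$ or, equivalently, use Corollary~\ref{vertex-link is bdry-link} to reduce the question to deciding $\bdy$-efficiency of the inflation $\T_\Lambda$; in either form $\T^*$ is $\bdy$-efficient precisely when no ideal vertex has a closed normal surface isotopic into its vertex-linking surface without being normally isotopic into it.

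The substantive content is carried by Theorems~\ref{determine-bdry-eff} and~\ref{determine-vertex-linking}, so the corollary itself is largely a composition of known decision procedures, and I expect the routine assembly to be unproblematic. The point needing genuine care is the necessity step: one must verify that $\bdy$-efficiency cannot coexist with reducibility, $\bdy$-reducibility, or an essential annulus, so that the algorithm may legitimately short-circuit to ``not $\bdy$-efficient'' when a recognition test fails. The delicate issue there is avoiding circularity in the order of implications---deducing $0$-efficiency (hence irreducibility and $\bdy$-irreducibility) before invoking the annular barrier argument, and confirming that the disk and annulus barrier arguments of Proposition~\ref{bdy-eff-gives-annular-eff} yield the required non-normally-isotopic boundary-parallel surface without presupposing the very conclusions being established.
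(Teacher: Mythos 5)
Your decision procedure, in its core, is the same as the paper's: run the cited recognition algorithms for irreducibility, $\bdy$--irreducibility and an-annularity, and when they succeed invoke Theorem \ref{determine-bdry-eff} (respectively Theorem \ref{determine-vertex-linking}, vertex by vertex, for the ideal case). The paper's proof of the corollary is exactly this assembly and nothing more. The problem is the step you added in order to cover the case when a topological test fails, namely the claim that $\bdy$-efficiency of $\T$ forces $M$ to be irreducible, $\bdy$--irreducible and an-annular. That claim is precisely the converse implication the paper disavows: immediately after Proposition \ref{bdy-eff-gives-annular-eff} the paper states that ``in general, a $\bdy$-efficient triangulation does not imply annular-efficient.'' If your necessity claim held, then combining it with Proposition \ref{bdy-eff-gives-annular-eff} would show that every $\bdy$-efficient triangulation is annular-efficient, contradicting that remark; equivalently, any $\bdy$-efficient but non-annular-efficient triangulation (whose existence that remark asserts) must sit on a manifold failing one of your three tests, and on such an input your short-circuit ``a recognition test failed, hence $\T$ is not $\bdy$-efficient'' returns the wrong answer.

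The reason your necessity argument breaks is that the barrier arguments you appeal to genuinely need the topological hypotheses that Proposition \ref{bdy-eff-gives-annular-eff} carries. If $D$ is a non-vertex-linking normal disk whose boundary is \emph{essential} in $\bdy M$ (which is exactly what can happen when $M$ is $\bdy$--reducible), then the frontier of a regular neighborhood of $\bdy M\cup D$ is obtained by compressing a push-off of a boundary component along $D$; it is a surface of smaller genus, and its normalization on the side away from the barrier need not be isotopic into $\bdy M$ at all, so no closed normal surface violating $\bdy$-efficiency is produced. The same happens with an essential, non-boundary-parallel annulus: surgering $\bdy M$ along it yields surfaces not isotopic into $\bdy M$. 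Only when $M$ is already known to be irreducible and $\bdy$--irreducible (so the disk, respectively the annulus, is parallel into $\bdy M$) does the compression reproduce a surface isotopic to a boundary component but separated from it by the barrier, which is what the proposition actually uses. So the ``delicate issue'' you flagged at the end is not a matter of ordering the implications carefully; it is a genuine gap, and the corollary should be read as the paper proves it: decidability via Theorems \ref{determine-bdry-eff} and \ref{determine-vertex-linking} once the recognition algorithms have verified their hypotheses, not an unconditional test justified by a (false) converse to Proposition \ref{bdy-eff-gives-annular-eff}.
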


We now continue our program of constructing, under various topological hypotheses, essential triangulations from arbitrary triangulations. We need a couple of preliminary results.   We refer the reader to  \cite{jac-rub-0eff}  for crushing a triangulation of a compact, orientable 3-manifold with boundary $X$ along a normal surface to an ideal triangulation of $\open{X}$ and to \cite{jac-rub-inflate} for the inverse operation of inflating an ideal triangulation of $\open{X}$ to a minimal-vertex, normal-boundary triangulation of $X$. Both methods are reviewed in Section \ref{crushing-inflations} of this work.\\

\begin{thm}\label{thm:bdry-eff} Suppose $M\ne \mathbb{B}^3$ is a compact, orientable,
irreducible, $\bdy$--irreducible,  and an-annular $3$--manifold with
nonempty boundary. Then there is an algorithm that will modify any triangulation $\T$ of $M$   to
a boundary-efficient triangulation of ${M}$ and, hence, an annular-efficient triangulation of $M$.
\end{thm}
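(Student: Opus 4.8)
The plan is to leave $M$ temporarily, work on an ideal triangulation of $\open{M}$ until it is boundary-efficient, and then re-enter $M$ by a single inflation, at which point the normal-surface correspondence already established does almost all of the work. Note first that since $M$ is irreducible, $\bdy$--irreducible, and $M\ne\mathbb{B}^3$, no component of $\bdy M$ is a $2$--sphere, so the hypotheses of Theorem~\ref{bijection-ideal-inflate}, Corollary~\ref{vertex-link is bdry-link}, and Theorem~\ref{determine-vertex-linking} are all available throughout. The three phases are: (1) crush $\T$ to an ideal triangulation of $\open{M}$; (2) clean that ideal triangulation to a boundary-efficient one; (3) inflate and read off the conclusion.

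First I would crush $\T$ along a normal surface to an ideal triangulation $\T^*_0$ of $\open{M}$, as in \cite{jac-rub-0eff}. The substance is phase (2). I would iterate the algorithm of Theorem~\ref{determine-vertex-linking} over the ideal vertices: if it ever fails to produce a surface, then $\T^*_0$ is already boundary-efficient and I stop; otherwise it constructs a closed normal surface $S^*$ that is isotopic to a vertex-linking surface $S_{v^*}$ but not normally isotopic to it. Because $M$ is irreducible and $\bdy$--irreducible, $S^*$ is in fact parallel to $S_{v^*}$, so the two cobound a nondegenerate product collar $P\cong S_{v^*}\times I$. I would then crush $\T^*_0$ along $S^*$ (when several ideal vertices are present, crushing $S^*$ together with the remaining vertex links, so that the retained complementary region is vertex-free). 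The retained region is homeomorphic to $M$ with an open collar deleted, its interior is again $\open{M}$, and the crushed image of $S^*$ becomes the new ideal vertex at $v^*$; since $P$ is nondegenerate, at least one tetrahedron is removed, so the resulting ideal triangulation $\T^*_1$ of $\open{M}$ has strictly fewer tetrahedra. As the tetrahedron count is a nonnegative integer, the iteration halts at a boundary-efficient ideal triangulation $\T^*$ of $\open{M}$.

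For phase (3) I would inflate $\T^*$, via the construction of \cite{jac-rub-inflate}, to a minimal-vertex, normal-boundary triangulation $\T_\Lambda$ of $M$. By definition of a boundary-efficient ideal triangulation, $\T^*$ has no closed normal surface isotopic to a vertex-linking surface but not normally isotopic to it; by Corollary~\ref{vertex-link is bdry-link} this is equivalent to $\T_\Lambda$ having no closed normal surface isotopic into $\bdy M$ but not normally isotopic into $\bdy M$, which is precisely boundary-efficiency of $\T_\Lambda$ (and ``normally isotopic into $\bdy M$'' is meaningful because $\T_\Lambda$ has normal boundary). Finally, since $M$ is irreducible, $\bdy$--irreducible, and an-annular, Proposition~\ref{bdy-eff-gives-annular-eff} promotes boundary-efficiency of $\T_\Lambda$ to annular-efficiency, which is the assertion of the theorem. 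Each step — crushing, the detection of Theorem~\ref{determine-vertex-linking}, and inflation — is effective, so the composite is an algorithm.

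I expect the hard part to be the crushing reduction in phase (2). One must verify that crushing along $S^*$ (suitably augmented by the other vertex links) satisfies the three hypotheses of Theorem~\ref{combinatorial-crush}: that the retained region is not entirely product blocks, that $\mathbb{P}(\C_X)$ is a trivial product region, and that there are no cycles of truncated prisms. The product structure of the collar $P$, together with the fact that $S^*$ is a genuine normal representative distinct from $S_{v^*}$, should force all three, but the bookkeeping is delicate — in particular, confirming that deleting $P$ strictly lowers the tetrahedron count (so the iteration terminates) and that the remaining ideal vertices are left untouched when $\bdy M$ has several components. A secondary point, needed to make the crush topologically trivial, is that a closed normal surface isotopic to a vertex-linking surface is actually parallel to it; this is where irreducibility and $\bdy$--irreducibility of $M$ are used.
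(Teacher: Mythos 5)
Your proposal is correct and follows essentially the same route as the paper: crush $\T$ to an ideal triangulation via Theorem 7.1 of \cite{jac-rub-0eff}, iteratively crush along the constructed non-vertex-linking surface together with the remaining vertex links until boundary-efficiency is reached (this intermediate step is exactly the paper's Theorem \ref{thm:ideal-bdry-eff}), then inflate and apply Corollary \ref{vertex-link is bdry-link} followed by Proposition \ref{bdy-eff-gives-annular-eff}. The only refinements worth noting: the paper's termination argument is that a non-vertex-linking surface must contain a normal quadrilateral, so some tetrahedron becomes a truncated prism rather than a truncated tetrahedron (not that the collar $P$ being nondegenerate ``removes'' a tetrahedron), and the crushing need not satisfy the hypotheses of Theorem \ref{combinatorial-crush} as you anticipate verifying --- the paper instead invokes the more general induced product region machinery handled in the proof of Theorem 7.1 of \cite{jac-rub-0eff}.
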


Before providing the proof of Theorem \ref{thm:bdry-eff}, we have a result from \cite{jac-rub-0eff} that provides, under our hypothesis and for any triangulation $\T$ of $M$, a nice algorithm to modify $\T$ to an ideal triangulation $\T'$  of $\open {M}$.\\

\noindent Theorem 7.1 \cite{jac-rub-0eff} Suppose $M$ is a compact, irreducible, $\bdy$-irreducible, an-annular 3-manifold, then any triangulation $\T$  of $M$ admits a crushing along a closed normal surface, each component of which is isotopic to a distinct component of $\bdy M$, giving an ideal triangulation $\T'$ of $\open{M}$; furthermore,  $\abs{\T'}<\abs{\T}$.\\

This exhibits immediately one of the benefits of our approach. The algorithm constructs an ideal triangulation  $\T'$ of $\open M$ from the tetrahedra and face identifications of the given triangulation $\T$ of $M$. Furthermore, the crushing offers substantial reduction of potential complexity issues, since  $\abs{\T'}<\abs{\T}$. Note that for this 
step the crushing may not be a combinatorial crushing (see Section \ref{crushing-inflations}) and may require an induced product region, which is handled in the proof of Theorem 7.1 in \cite{jac-rub-0eff}. 

We remark here and discuss later in this paper related work by M. Lackenby \cite{lack-h-genus} that depends on results of S. Matveev \cite{matveev-book} for the existence of an ideal triangulation  and presents a discussion of an algorithm to construct an ideal triangulation of $\open{M}$ having a partially flat angled ideal structure. The conditions on the manifolds in \cite{lack-h-genus} include our conditions for the proof of Theorem 7.1 of  \cite{jac-rub-0eff} but add the additional condition that there are no essential  (incompressible and not isotopic into boundary) tori. This latter condition enables Lackenby to get the partially flat angled ideal triangulation.  

Next, we have under the same hypothesis on $M$ as that of Theorem 7.1 \cite{jac-rub-0eff}, any ideal triangulation of $\open {M}$ can be modified to a $\bdy$-efficient triangulation of $\open{M}$.

\begin{thm}\label{thm:ideal-bdry-eff} Suppose $M\ne \mathbb{B}^3$ is a compact, orientable,
irreducible, $\bdy$--irreducible,  and an-annular $3$--manifold with
 boundary. Then there is an algorithm that will modify any ideal triangulation of $\open {M}$  to
a boundary-efficient triangulation of $\open {M}$.
\end{thm}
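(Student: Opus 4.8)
The plan is to detect and then remove, one at a time, the closed normal surfaces that obstruct boundary-efficiency, using Theorem~\ref{determine-vertex-linking} as the detector and a crushing move as the simplification, with the number of tetrahedra as the complexity that forces termination. First I would run the algorithm of Theorem~\ref{determine-vertex-linking} at each ideal vertex $v^*$ of $\T^*$. If for every ideal vertex there is no closed normal surface isotopic to the vertex-linking surface $S_{v^*}$ but not normally isotopic to it, then $\T^*$ is boundary-efficient by definition and the algorithm halts. Otherwise the detector produces a closed normal surface $S^*$ isotopic to some $S_{v^*}$ but not normally isotopic to it; such an $S^*$ cuts a collar $S_{v^*}\times I$ off the cusp at $v^*$, and the goal of one step is to crush this collar away.

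The crushing step is the core of the argument. Because $M$ may have several boundary components, crushing along $S^*$ alone need not expose a vertex-free complementary region, so I would instead crush along $S=S^*\cup\bigcup_{i\ge 2}S_{v_i^*}$, where the $S_{v_i^*}$ are small normal vertex-linking surfaces at the remaining ideal vertices, made disjoint from $S^*$ by a normal isotopy (vertex-linking surfaces can always be pushed into their cusps off any given normal surface). The closure $X$ of the complementary region meeting none of the ideal vertices is then a compact manifold homeomorphic to $M$ that contains no vertex of $\T^*$. Applying Theorem~\ref{combinatorial-crush}, passing to an induced product region $\bbb P(X)$ if necessary exactly as in the proof of Theorem~7.1 of \cite{jac-rub-0eff}, crushes $\T^*$ along $S$ to an ideal triangulation $\T^{**}$ of $\open X=\open M$. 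Here the hypotheses that $M$ is irreducible, $\bdy$-irreducible and an-annular are used precisely as in that theorem, to guarantee both that the crushing conditions hold (in particular $X\ne\bbb P(\C_X)$ and there are no cycles of truncated prisms) and that the crushed manifold is again $\open M$ rather than some other manifold.

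For termination I would observe that $S^*$ must contain a quadrilateral: a closed normal surface built only from normal triangles is a disjoint union of vertex-linking surfaces, so a connected triangle-only surface isotopic to $S_{v^*}$ would have to be $S_{v^*}$ itself, contradicting the choice of $S^*$. In the induced cell decomposition $\C_X$, any tetrahedron of $\T^*$ carrying a quadrilateral of $S$ is split by that quadrilateral and contributes truncated prisms and product blocks rather than a central truncated tetrahedron; hence it does not survive as a tetrahedron of $\T^{**}$, while the corner-cutting coming from the triangle-only surfaces $S_{v_i^*}$ does not destroy any truncated tetrahedron. Therefore $|\T^{**}|<|\T^*|$. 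Replacing $\T^*$ by $\T^{**}$ and iterating, the strictly decreasing tetrahedron count forces the process to stop after finitely many steps, and by construction it stops only at a boundary-efficient ideal triangulation of $\open M$.

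The main obstacle I expect is the legitimacy of the crushing in this multi-cusp, possibly non-combinatorial situation: controlling the induced product region and ruling out cycles of truncated prisms so that Theorem~\ref{combinatorial-crush}, or its induced-product-region version, applies and returns $\open M$. This is exactly the technical content already established for boundary-parallel crushing in Theorem~7.1 of \cite{jac-rub-0eff}, and I would adapt that verification essentially verbatim, the only new ingredients being the auxiliary vertex-linking surfaces at the other cusps and the quadrilateral count that yields the strict drop in complexity.
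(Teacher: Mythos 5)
Your proposal is correct and follows essentially the same route as the paper's own proof: detect an offending surface $F$ isotopic but not normally isotopic to a vertex-linking surface via Theorem~\ref{determine-vertex-linking}, crush along $F$ together with the vertex-linking surfaces of the remaining ideal vertices by adapting the proof of Theorem~7.1 of \cite{jac-rub-0eff}, and terminate because the quadrilateral forced in $F$ turns at least one tetrahedron into a truncated prism, giving a strict drop in the tetrahedron count. Your added justification that a triangle-only closed normal surface must be vertex-linking is a correct elaboration of a step the paper states without proof.
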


\begin{proof}.
 We are given an ideal triangulation $\T' $ of $\open{M}$.  By Corollary \ref{determine-vertex-linking} we can decide if there is a closed normal surface in $\T'$ that is isotopic to a vertex-linking surface but is not itself a vertex-linking surface. If there is not one, then $\T'$ is a $\bdy$-efficient triangulations and we are done.  On the other hand, if there is one, then the algorithm will  construct one, say $F$, and $F$ is isotopic to a vertex-linking surface  but is not itself vertex-linking.  We wish to crush the triangulation $\T'$ along $F$.  However, to keep the situation consistent with the cell decompositions we like and our methods, if we have ideal vertex-linking surfaces  $S_{v_1^*},\ldots, S_{v_n^*}$ and notation has been chosen so that $F$ is isotopic to $S_{v_1^*}$ but is not normally isotopic to $S_{v_1^*}$, then we wish to crush the triangulation along the collection of surfaces $F,  S_{v_2^*},\ldots,  S_{v_n^*}$. 

If $X$ is the closure of the component of the complement of $F,  S_{v_2^*},\ldots,  S_{v_n^*}$ not meeting any of the ideal vertices of $\T'$, then $X$ is homeomorphic to $M$ and we can proceed in finding a collection of normal surfaces along which to crush the triangulation $\T'$, replacing the collection $E_1,\ldots, E_n$ in the proof of Theorem 7.1 of \cite{jac-rub-0eff} with the collection $F,  S_{v_2^*},\ldots,  S_{v_n^*}$.  Hence, we arrive at an ideal triangulation $\T^*$ of $\open{X}$ (homeomorphic with $\open{M}$) obtained by crushing $\T'$ along a collection of normal surfaces with at least one of them not vertex-linking.  

The tetrahedra of $\T^*$ come from a subset of the tetrahedra of $\T'$ that become truncated tetrahedra in the cell decomposition $\C_X$ of $X$.  Now, since one of the normal surfaces along which we are crushing is not vertex-linking, it must contain a normal quadrilateral, and hence, at least one of the tetrahedra of $\T'$ gives a truncated prism in $\C_X$ and we have that $\abs{\T^*} < \abs{T'}$. 

It follows that the process must stop and it stops only when we have an ideal triangulation of $\open{M}$ where the only closed normal surface isotopic to a vertex-linking surface is itself a vertex-linking surface.\end{proof}

We are now ready to prove Theorem \ref{thm:bdry-eff}. 
\begin{proof}

 Suppose $\T$ is a triangulation of $M$.  By Theorem 7.1 \cite{jac-rub-0eff}, there is an algorithm to modify the triangulation $\T$  to an ideal triangulation $\T'$ of $\open{M}$ and by Theorem \ref{thm:ideal-bdry-eff}, we can modify the ideal triangulation $\T'$ of $\open{M}$ to an ideal triangulation $\T^*$ of $\open{M}$ so that a (closed) normal surface isotopic to a vertex-linking surface is normally isotopic to that vertex-linking surface. The triangulation $\T^*$ is end-efficient.  
 
Construct any inflation, say $\widehat{\T}$, of the ideal triangulation $\T^*$. Then by Corollary \ref{vertex-link is bdry-link}, the triangulation  $\widehat{\T}$ of $M$ has the property that a closed normal surface in  $\widehat{\T}$  that is isotopic into $\bdy M$ is a boundary-linking surface. So, $\widehat{\T}$  is $\bdy$-efficient. From our observations above, the triangulation  $\widehat{\T}$can not have a normal disk that is not vertex-linking ( $\widehat{T}$ is 0--efficient) and can not have a normal annulus with essential boundary that is not thin edge-linking ( $\widehat{T}$ is annular-efficient).\end{proof}

\section{boundary slopes of surfaces}	If $S$ is a surface and $\gamma$ is a closed curve in $S$, then we call the isotopy class of $\gamma$ a \emph{slope} and refer to it as the slope of $\gamma$.  It follows from the proof of Proposition 3.2 of \cite{jac-rub-sed}  that if $M$ is a link-manifold (nonempty boundary and each boundary component is a torus) and $M$ has no essential annuli between distinct boundary components, then for any $0$--efficient triangulation $\T$ of $M$, there are only finitely many boundary slopes for normal surfaces of a bounded Euler characteristic. Hence, for such an $M$ there are only finitely many boundary slopes for incompressible and $\bdy$--incompressible surfaces of bounded Euler characteristic. We generalize this to general (arbitrary genera for boundary components) compact, irreducible, $\bdy$--irreducible, and an-annular 3-manifolds.

First, we have the following lemma.

\begin{lem}\label{normal-sum-edge-linking-annulus}  Suppose $M$ is a compact 3-manifold with nonempty boundary and $\T$ is a triangulation of $M$.  Furthermore, suppose $F'$ is a normal surface and $A$ is a thin edge-linking annulus about an edge in $\bdy M$. If the Haken sum $F'+A$ is defined, then $F' + A$ is either
\begin{enumerate}\item [(i)] The disjoint union $F'\cup A$, \item[(ii)] A normal surface $F$ and a vertex-linking surface, or
\item[(iii)] A normal surface $F$ isotopic to $F'$.\end{enumerate}\end{lem}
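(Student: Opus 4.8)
The plan is to compute the topology of the Haken sum $F' + A$ from its geometric description as a regular exchange along the trace curves $\Gamma = F' \cap A$, taking full advantage of the fact that $A$ is a cycle of normal quadrilaterals that may be normally isotoped into an arbitrarily small regular neighborhood $N(e)$ of the edge $e \subset \bdy M$, with core curve $c$ parallel to $e$. First I would set up the intersection: since $F' + A$ is defined, in each tetrahedron meeting $e$ the quadrilateral of $A$ is either disjoint from $F'$ or has the same type as the quadrilateral of $F'$, so $\Gamma$ is a properly embedded $1$--manifold in the annulus $A$, consisting of simple closed curves in $\operatorname{int} A$ and arcs with endpoints on $\partial A \subset \bdy M$. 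If $\Gamma = \emptyset$ then $F'$ and $A$ are disjoint and $F' + A = F' \cup A$, giving conclusion (i). Otherwise I would, after a normal isotopy minimizing $\abs{\Gamma}$, classify the components of $\Gamma$ as curves on $A$: inessential simple closed curves, core-parallel (essential) simple closed curves, and arcs meeting $\partial A$.

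Next I would carry out the regular exchange type by type. An innermost inessential simple closed curve bounds a disk $D_A \subset A$ with interior disjoint from $F'$; the coherent switch along it returns $F'$ up to normal isotopy and reproduces the disk of $A$, so such curves contribute nothing to the reduced sum and I would argue they do not occur for a $\abs{\Gamma}$--minimizing pair. This leaves core-parallel curves and boundary arcs. The core-parallel curves cut $A$ into parallel sub-annuli; because the switch is orientation-coherent and follows the cycle of quads once around $e$, reconnecting the bands of $A$ to the sheets of $F'$ on either side of $e$ either reassembles all the pieces into a single surface $F$ isotopic to $F'$ (the core of $A$ being isotoped across the edge $e$), which is conclusion (iii), or peels the annulus off again as a parallel component, returning $F' \cup A$, which is conclusion (i).

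The main obstacle, and the origin of conclusion (ii), is the behaviour of $\Gamma$ near an endpoint vertex $v$ of $e$: the arc components ending on $\partial A$ and the quadrilaterals of $A$ closest to $v$. I would show that when an outermost such arc, or a trace curve running through the quadrilaterals adjacent to $v$, is resolved, the exchange pinches off a component that lies in an arbitrarily small neighborhood of $v$ and is a single normal disk linking $v$; thus $F' + A = F \cup D_v$ with $D_v$ a vertex-linking disk, which is conclusion (ii). The delicate point is to verify that these exhaust the possibilities --- that no exchange produces a component of positive genus or an essential annulus --- which I would establish by noting that every new component is built from the cycle of quadrilaterals of $A$ surgered against sheets of $F'$ inside the thin neighborhood $N(e)$, hence is planar and is either absorbed into $F'$, a parallel copy of $A$, or a vertex-linking disk. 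Throughout, the additivity $\chi(F' + A) = \chi(F') + \chi(A) = \chi(F')$ serves as a consistency check, forcing the split-off piece in (ii) to have $\chi = 1$ (a disk) and the surface $F$ in (iii) to satisfy $\chi(F) = \chi(F')$, consistent with $F$ being isotopic to $F'$.
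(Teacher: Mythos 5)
There is a genuine gap, and it starts with your picture of $A$. You describe $A$ as ``a cycle of normal quadrilaterals'' around $e$ with core curve parallel to $e$; that is the thin edge-linking annulus about an edge in the \emph{interior} of $M$ (the right-hand picture in the paper's figure of thin edge-linking annuli). The lemma concerns an edge $e$ lying in $\bdy M$: there the tetrahedra incident to $e$ form a fan, not a cycle, and $A$ is a band of quadrilaterals closed up by triangles near the vertices of $e$, with both boundary circles on $\bdy M$. This matters because of the one structural fact the paper's proof turns on and your proposal never isolates: after normally isotoping $A$ into an arbitrarily small neighborhood of $e$, and using that the Haken sum is defined (so any quad of $F'$ in a tetrahedron around $e$ has the same type as the quad of $A$ there and is therefore disjoint from it, while the triangles of $A$ can be taken closer to the vertices than any disk of $F'$), the trace set $F'\cap A$ consists \emph{only} of spanning arcs running through the quads of $A$, meeting only normal triangles of $F'$. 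There are no closed trace curves at all --- neither inessential nor core-parallel --- so the bulk of your case analysis is spent on configurations that cannot occur, and the reductions you invoke for them (minimality of $\abs{\Gamma}$, coherence of the switches) are never actually needed.

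The more serious problem is how you distribute the conclusions among your cases. You assign conclusion (iii) exclusively to core-parallel closed curves, and you claim that an outermost arc component always pinches off a vertex-linking disk, giving (ii). Since closed curves never occur, your argument as outlined would show that whenever $F'\cap A\ne\emptyset$ the sum \emph{always} splits off a vertex-linking disk, which is false. In the paper's proof, each spanning arc $\alpha$ cuts off a small disk $d_\alpha$ of $F'$ near a vertex, with $\bdy d_\alpha=\alpha\cup\beta$, $\beta\subset\bdy M$, and the regular exchange joins a copy of $d_\alpha$ to $A$, effecting a $\bdy$-compression of $A$. The generic outcome of these exchanges is a single normal surface isotopic to $F'$ --- conclusion (iii); conclusion (ii) occurs only in the special configuration where two \emph{adjacent} $\bdy$-compressions cut off a vertex-linking disk. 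In particular, a single outermost arc yields (iii), not (ii). Your outline thus has no mechanism for producing (iii) from arcs, and (iii) is precisely the case that does the work in the application: in the finiteness-of-boundary-slopes theorem that follows, connected sums $F'+A$ are disposed of exactly because possibility (iii) is the one that holds there.
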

\begin{proof} Following a small isotopy of $A$, we have that $F'\cap A$ is at most a finite number of normal spanning arcs running through the normal quads of $A$ (hence, only meeting normal triangles of $F'$).  
If $F' \cap A=\emptyset$, then we have conclusion $(i)$. So, assume $F'\cap A\ne \emptyset$.

\begin{figure}[htbp] \psfrag{A}{\scriptsize
{$A$}}
 \psfrag{F}{\scriptsize{$F'$}}\psfrag{e}{\tiny{$e\in \bdy M$}}
 \psfrag{V}{\scriptsize{$v$}}\psfrag{D}{\scriptsize{$D$}}\psfrag{G}{\scriptsize{$F$}}
 \psfrag{1}{\scriptsize{$F'+A = D+F$}}\psfrag{2}{\scriptsize{$F'+A = F \sim F'$}}\psfrag{E}{\scriptsize{$F'+A=F$}}

        \begin{center}
  \includegraphics[width=3.5 in]{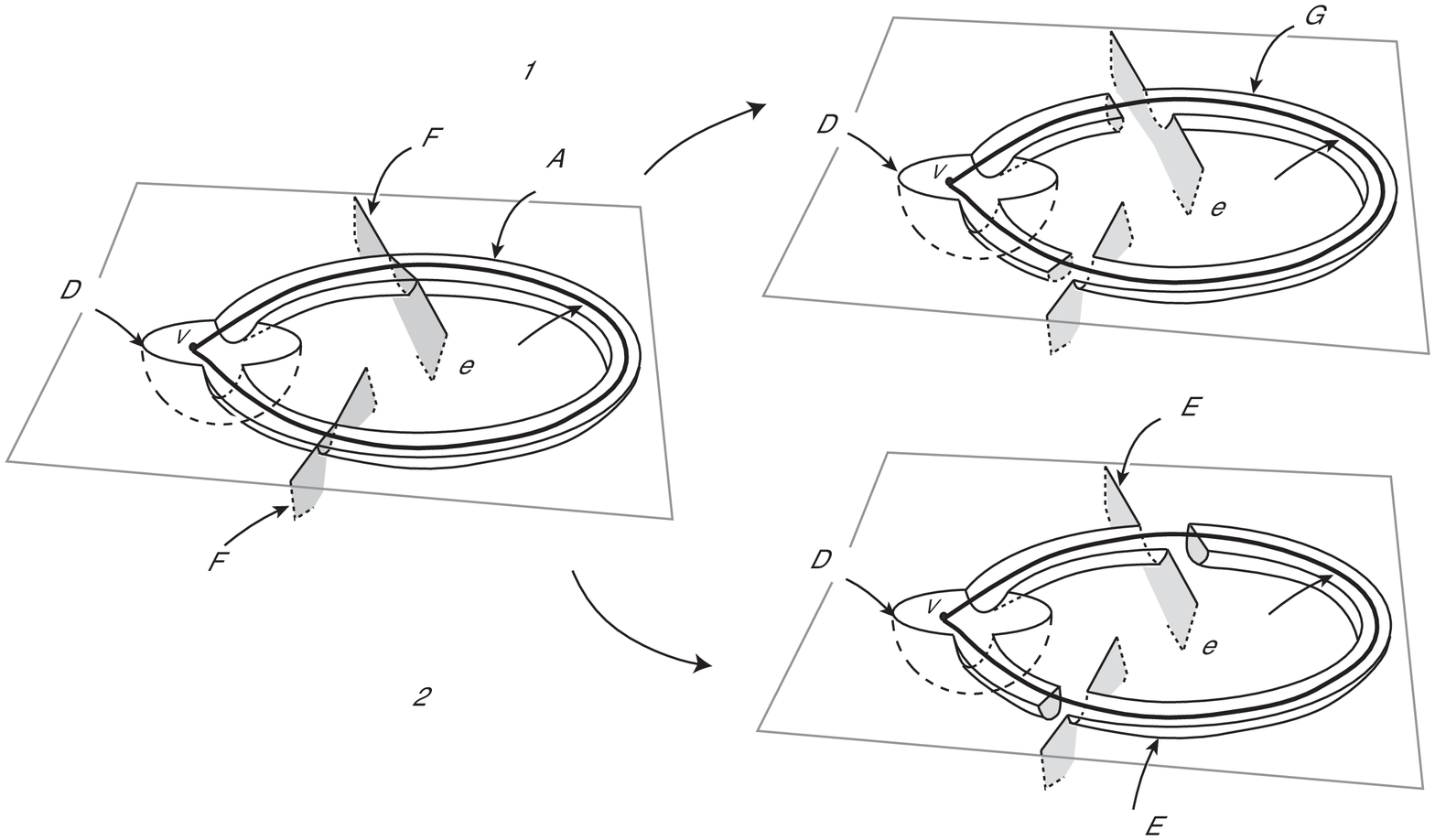}
\caption{Haken sum of normal surface with thin edge-linking annulus. } \label{fr-thin-annulus-Haken-sum.eps}
\end{center}
\end{figure}
 
 An arc $\alpha\subset F'\cap A$ cuts off a small disk $d_{\alpha}$ in $F$ where $\bdy d = \alpha\cup\beta$ with $\beta\subset\bdy M$.  Following a regular exchange along $\alpha$, a copy of $d_\alpha$ is joined with $A$ forming a $\bdy$-compression of $A$. See Figure \ref{fr-thin-annulus-Haken-sum.eps}. Since $F'+A$ is a normal surface, it is only possible that two adjacent such $\bdy$--compressions on $A$ occur so that after normal addition on $A$ they cut off the vertex-linking disk. See the top right-hand drawing in Figure \ref{fr-thin-annulus-Haken-sum.eps}. This gives possibility (ii) of our conclusion.  Otherwise, we get possibility (iii). See bottom right-hand drawing in Figure \ref{fr-thin-annulus-Haken-sum.eps}. \end{proof}

\begin{thm} Suppose $\T$ is an annular-efficient triangulation of the compact 3--manifold $M$.  Then there are only finitely many boundary slopes for connected normal surfaces in $\T$ of bounded Euler characteristic.\end{thm}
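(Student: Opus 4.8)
The plan is to bound the number of boundary slopes by controlling how a connected normal surface of bounded Euler characteristic can meet the boundary, and to do this by reducing the general problem to the work on fundamental surfaces via Haken sums. The key structural fact I would exploit is that in an annular-efficient triangulation the only ``small'' normal surfaces that can appear as summands with essential or boundary-parallel behavior are vertex-linking disks/surfaces and thin edge-linking annuli, the latter controlled by Lemma \ref{normal-sum-edge-linking-annulus}. First I would recall that every normal surface can be written as a Haken sum of fundamental surfaces, of which there are only finitely many for a fixed triangulation $\T$. The boundary slope of a normal surface is determined by its boundary curves in $\bdy M$, and a normal surface meets each boundary face in a fixed pattern of normal arcs; thus the slope is determined by the induced arc-pattern (the boundary normal arcs), which lives in the finite set of normal arc types on $\bdy M$.

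The central idea is to relate the Euler characteristic bound to the number of fundamental summands whose boundaries contribute a \emph{new} slope. First I would observe that a thin edge-linking annulus about an edge $e\subset\bdy M$ has boundary curves that are each isotopic into a small neighborhood of $e$, and so contributes a bounded set of possible slopes (indeed each edge of $\T$ in $\bdy M$ gives at most two boundary slopes). Next, for a connected normal surface $S$ with $\eu(S)$ bounded below, I would write $S = \sum n_i F_i$ as a sum of fundamental surfaces. The surfaces with $\eu \ge 0$ — the vertex-linking disks, the boundary-linking surfaces, and the thin edge-linking annuli — can occur with large coefficients, but by Lemma \ref{normal-sum-edge-linking-annulus} adding a thin edge-linking annulus to a normal surface either leaves it isotopic (changing nothing about the slope), splits off a vertex-linking surface, or leaves a disjoint union. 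Using this repeatedly, I would argue that adding copies of edge-linking annuli and vertex-linking surfaces does not introduce boundary slopes beyond the finitely many edge-slopes together with the slopes already carried by the negative-Euler-characteristic part.

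The heart of the argument is then a counting bound on the negative-Euler-characteristic summands. Since $\eu(S)=\sum n_i \eu(F_i)$ and each fundamental $F_i$ with $\eu(F_i)<0$ contributes $\le -1$, a lower bound $\eu(S)\ge -K$ forces $\sum_{\eu(F_i)<0} n_i \le K$. Hence only a bounded number (at most $K$, counted with multiplicity) of negative summands appear, and since there are finitely many fundamental surfaces, there are finitely many possible multisets of such summands. Each such multiset determines a bounded collection of boundary arcs in $\bdy M$, hence finitely many possible boundary slopes; the remaining summands of nonnegative Euler characteristic contribute only the finitely many edge-slopes by the analysis above. Combining, the total slope set for all connected $S$ with $\eu(S)\ge -K$ is finite.

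The main obstacle I anticipate is handling the nonnegative-Euler-characteristic summands rigorously, because these can occur with unbounded multiplicity and one must be certain they do not generate infinitely many slopes or interact with the negative part to twist the boundary curves into new slopes. Annular-efficiency is exactly what rules out essential annuli other than thin edge-linking ones, so there are no ``spiralling'' annular summands that would shear the boundary; Lemma \ref{normal-sum-edge-linking-annulus} then pins down precisely the three mutually exclusive outcomes of summing with an edge-linking annulus, none of which alters the slope beyond the finite edge-slope set. Making this interaction argument airtight — showing that regular exchanges along the boundary cannot accumulate to produce a new slope — is the delicate step, and I would carry it out by tracking the boundary arc-pattern directly rather than the surface in the interior, since the slope is a function of that boundary data alone.
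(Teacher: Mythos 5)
Your proposal follows essentially the same route as the paper's proof: decompose the surface into fundamental summands, use the Euler characteristic bound to confine the negative-$\chi$ summands to finitely many possibilities (the closed $\chi=0$ summands, tori and Klein bottles, have empty boundary and so contribute no slopes), and invoke Lemma \ref{normal-sum-edge-linking-annulus} to control the annulus summands. The ``delicate interaction step'' you flag at the end resolves immediately rather than requiring any boundary arc-pattern tracking: since the surface is connected, outcomes (i) and (ii) of that lemma are impossible (both produce disconnected surfaces), so every addition of a thin edge-linking annulus falls under outcome (iii) and leaves the surface isotopic to what it was, hence with unchanged boundary slopes.
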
 

\begin{proof}  A triangulation $\T$ determines a collection of normal surfaces. Among these is a unique collection of normal surfaces (the fundamental normal surfaces) $F_1,\ldots, F_P,$ $T_1,\ldots,T_Q,$ $ A_1,\ldots, A_R$ such that any normal surface $F$ can be written as a Haken sum $F = \sum_1^P p_k F_k +\sum_1^Q q_m T_m +\sum_1^R r_n A_n$, where $p_k, 1\le k\le P; q_m, 1\le m\le Q;$ and $r_n, 1\le n\le R$ are nonnegative integers, $\chi(F_k)<0$, $T_m$ a torus or Klein bottle, and $A_n$ an annulus. Since $\T$ is annular-efficient, $M$ is an-annular and, hence, no $A_m$ can be a M\"obius band.  If we set $F'' = \sum_1^P p_k F_k$, then we have $\chi(F) = \chi(F'')$ and observe for surfaces $F$ with bounded Euler characteristic, there can be only finitely many sums $F'' = \sum_1^R p_k F_k$ of bounded Euler characteristic. 

Hence, for surfaces $F'$ of bounded Euler characteristic, there are at most a finite number of boundary slopes for surfaces of the form   $F' = \sum_1^P p_k F_k +\sum_1^Q q_m T_m$. However, for any connected surface $F = F'+A_r$, only one of the possibilities in Lemma \ref{normal-sum-edge-linking-annulus} can hold and that is (iii). It follows that for $F$ connected and $F = F' + \sum_1^R r_n A_n$, we have $F\sim F'$, $F$ isotopic to $F'$, and so $F$ and $F'$ have the same boundary slopes. This proves our theorem.\end{proof}

The following corollary is immediate as an incompressible and $\bdy$--incompressible surface in an irreducible and $\bdy$--irreducible 3--manifold is isotopic to a normal surface in any triangulation.

\begin{cor} Suppose $M\ne \mathbb{B}^3$ is a compact,
irreducible, $\bdy$--irreducible,  an-annular $3$--manifold. Then there are only finitely many boundary slopes for connected, incompressible, and $\bdy$--incompressible  surfaces in $M$ of bounded Euler characteristic
\end{cor}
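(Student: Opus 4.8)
The plan is to deduce this corollary from the theorem immediately above it (finitely many boundary slopes for connected normal surfaces of bounded Euler characteristic) by reducing the class of incompressible, $\bdy$--incompressible surfaces to the class of normal surfaces. The bridge is the classical normalization fact quoted just before the statement: in an irreducible and $\bdy$--irreducible $3$--manifold, every incompressible and $\bdy$--incompressible surface is isotopic to a normal surface with respect to any fixed triangulation. Since a boundary slope is by definition an isotopy class of boundary curve, an isotopy between a surface and its normal form cannot change the slope, and this is precisely what lets the finiteness statement pass from normal surfaces to arbitrary incompressible, $\bdy$--incompressible ones.

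Concretely, I would proceed in four steps. First, produce a good triangulation: since $M\ne\mathbb{B}^3$ is compact, orientable, irreducible, $\bdy$--irreducible, and an-annular, Theorem \ref{thm:bdry-eff} guarantees that $M$ admits an annular-efficient triangulation $\T$; fix one. Second, take any connected, incompressible, and $\bdy$--incompressible surface $F$ in $M$ with $\eu(F)$ bounded below by a prescribed constant, and apply the normalization fact to obtain a normal surface $F_0$ of $\T$ with $F$ ambient isotopic to $F_0$. Third, observe that this ambient isotopy of the pair $(M,\bdy M)$ preserves connectivity, preserves the Euler characteristic (so $\eu(F_0)=\eu(F)$ lies in the same bounded range), and carries $\bdy F$ to $\bdy F_0$ through curves in $\bdy M$, so $F$ and $F_0$ determine the same boundary slope. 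Fourth, invoke the preceding theorem: the set of boundary slopes realized by connected normal surfaces of $\T$ of bounded Euler characteristic is finite, hence the slopes realized by the surfaces $F$ all lie in this finite set, which proves the corollary.

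The one step requiring genuine care, and the one I expect to be the main obstacle, is the normalization for a surface with nonempty boundary: one must confirm that putting $F$ into normal form is realized by an ambient isotopy, so that neither $\eu(F)$ nor the isotopy class of $\bdy F$ in $\bdy M$ is altered. The hypotheses of irreducibility and $\bdy$--irreducibility are exactly what force every compressing and $\bdy$--compressing disk encountered during normalization to be inessential; consequently the normalization removes only trivial intersections with the $2$--skeleton and is performed by isotopy rather than by surgery. Granting this (the standard two-sided case is Haken's theorem, and the an-annular, $\bdy$--irreducible setting supplies the needed control), the preservation of slope and Euler characteristic is immediate and the corollary follows at once from the finiteness result for normal surfaces.
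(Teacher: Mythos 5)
Your proposal is correct and takes essentially the same route as the paper: the paper's entire proof is the one-line observation that an incompressible, $\partial$--incompressible surface in an irreducible, $\partial$--irreducible $3$--manifold is isotopic to a normal surface in any triangulation, applied to an annular-efficient triangulation of $M$ (supplied by Theorem \ref{thm:bdry-eff}) together with the immediately preceding finiteness theorem for connected normal surfaces of bounded Euler characteristic. The normalization step you single out as the delicate point is exactly the classical fact the paper invokes without further argument, and your observation that the isotopy preserves connectivity, Euler characteristic, and boundary slope is the same (implicit) bookkeeping the paper relies on.
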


\section{Summary}

 In \cite{lack-HS-genus} and  following the work of Thurston (see \cite{Morgan-Smith-conj}),  Epstein and Penner \cite{epstein-penner-ideal}, and Kojima \cite{kojima}, it is shown for $M$  a compact, orientable, simple  (irreducible and $\bdy$-irreducible, an-annular, and atoridal) 3-manifold with boundary, no component a 2-sphere, then $\open{M}$ admits a partially flat angled ideal triangulation.  A partially flat angled ideal triangulation has no normal surfaces with positive Euler characteristic (no normal 2-spheres) and so is a $0$-efficient triangulation; similarly, the only normal surfaces with $0$ Euler characteristic are vertex-linking normal tori and so is a 1-efficient ideal triangulation.  We have not been able to prove that our combinatorial methods give a 1-efficient ideal triangulation. For compact 3-manifolds with only tori boundary, a partially flat angled ideal triangulation is also end-efficient. Using our methods and inflating the partially flat angled ideal triangulation, we arrive at a 1-efficient triangulation of the link manifold.  
 
 For higher genera boundary, one can use our methods for modifying a partially flat angled ideal triangulation to an end-efficient triangulation and by inflating, get an annular-efficient triangulation of the compact simple 3-manifold with nonempty boundary, no component of which is a 2-sphere.

\bibliographystyle{plain}
\bibliography{arXiv-efficient-triangulations-boundary-slopes}

\begin{thebibliography}{10}

\bibitem{arm}
S.~Armentrout.
\newblock Cellular decompositions of $3$--manifolds that yield $3$--manifolds.
\newblock {\em Memoirs Amer. Math. Soc.}, 107:72pp, 1971.

\bibitem{epstein-penner-ideal}
D.B.A. Epstein and R.C.Penner.
\newblock Euclidean decomposition of non-compact hyperbolic manifolds.
\newblock {\em J. Differential Geom.}, 27:67--80, 1988.

\bibitem{haken-norflach}
Wolfgang Haken.
\newblock {\it Theorie der {N}ormalfl\"achen: {E}in {I}sotopickriterium f\"ur
  der {K}reisknoten}.
\newblock {\em Acta Math.}, 105:245--375, 1961.

\bibitem{haken-homo}
Wolfgang Haken.
\newblock {\it \"{U}ber das {H}om\"{o}omorphieproblem der
  $3$--{M}annigfaltigkeiten I}.
\newblock {\em Math.Zeitschr.}, 80:89--120, 1962.

\bibitem{hat-82}
A.E. Hatcher.
\newblock On the boundary curves of incompressible surfacse.
\newblock {\em Pacific J. Math}, 99(2):373--377, 1982.

\bibitem{jac-rub-0eff}
W.~Jaco and J.~H. Rubinstein.
\newblock $0$--efficient triangulations of 3-manifolds.
\newblock {\em J. Diff. Geom.}, 65:61--168, 2003.
\newblock arXiv:math/0207158.

\bibitem{jac-rub-sed}
W.~Jaco, J.~H. Rubinstein, and E.~Sedgwick.
\newblock Finding planar surfaces in knot- and link-manifolds.
\newblock {\em J.Knot Theory and its Appl.}, 18(3):397--446, 2009.
\newblock arXiv:math/0608700.

\bibitem{jac-sed-dehn}
W.~Jaco and E.~Sedgwick.
\newblock Decision problems in the space of {D}ehn fillings.
\newblock {\em Topology}, 42:845--906, 2003.
\newblock ArXiv:math.GT/9811031.

\bibitem{jac-tol}
W.~Jaco and J.~Tollefson.
\newblock Algorithms for the complete decomposition of a closed $3$-manifold.
\newblock {\em Illionis J. Math}, 39(3):358--406, 1995.

\bibitem{jac-rub-inflate}
William Jaco and J.~H. Rubinstein.
\newblock Inflations of ideal triangulations of $3$--manifolds.
\newblock {\em {\it Adv.Math.}}, 267:176--224, 2014.

\bibitem{jac-rub-layered}
William Jaco and J.~Hyam Rubinstein.
\newblock Layered triangulations of three-manifolds.
\newblock arXiv:math/0603601.

\bibitem{kne}
H.~Kneser.
\newblock {\it Geschlossene {F}l\"achen in dreidimensionalen
  {M}annigfaltigkeiten}.
\newblock {\em Jahresbericht der Deut. Math. Verein.}, 38:248--260, 1929.

\bibitem{kojima}
S.~Kojima.
\newblock Polyhedral decomposition of hyperbolic 3-manifolds with totally
  geodesic boundary, aspects of low-dimensional manifolds.
\newblock {\em Adv. Stud. Pure Math.}, Tokyo(20):243--282, 1992.

\bibitem{lack-taut}
M.~Lackenby.
\newblock {Taut ideal triangulations of 3-manifolds}.
\newblock {\em Geom. Topol.}, 4:369--395, 2000.
\newblock arXiv:math/0003132.

\bibitem{lack-HS-genus}
M.~Lackenby.
\newblock An algorithm to determine the heegaard genus of simple 3-manifolds
  with nonempty boundary.
\newblock {\em {\it Algebr. Geom. Topol.}}, 8(2):911--934, 2008.

\bibitem{Morgan-Smith-conj}
John~W. Morgan.
\newblock {\em \it On Thurston's uniformization theorem for three-dimensional
  manifolds: The Smith Conjecture}.
\newblock Pure Appl.Math., 112, Academic Press, 1984.

\bibitem{rubin-poly}
J.~H. Rubinstein.
\newblock Polyhedral minimal surfaces, {H}eegaard splittings and decision
  problems for $3$-dimensional manifolds.
\newblock In {\em Geometric {T}opology (Athens, GA, 1993)}, volume~2 of {\em
  AMS/IP Stud. Adv. Math.}, pages 1--20. Amer. Math. Soc., Providence, RI,
  1997.

\bibitem{sch}
H.~Schubert.
\newblock {\it {B}estimmung der {P}rimfaktorzerlegung von {V}erkettungen}.
\newblock {\em Math. Zeitschr.}, 76:116--148, 1961.

\bibitem{sie}
L.~Siebenmann.
\newblock Approximating cellular maps with homeomorphisms.
\newblock {\em Topology}, 11:271--294, 1972.

\bibitem{tho}
A.~Thompson.
\newblock Thin position and the recognition problem for {$S^3$}.
\newblock {\em Math. Res. Lett.}, 1(5):613--630, 1994.

\end{thebibliography}

\end{document}